\documentclass[11pt]{article}

\usepackage[a4paper,margin=1.15in]{geometry}
\usepackage{amsmath,amssymb,amsthm,mathtools}
\usepackage{mathrsfs}
\usepackage{enumitem}
\usepackage[colorlinks=true,linkcolor=blue,citecolor=blue,urlcolor=blue]{hyperref}
\numberwithin{equation}{section}

\newtheorem{theorem}{Theorem}[section]
\newtheorem{proposition}[theorem]{Proposition}
\newtheorem{lemma}[theorem]{Lemma}
\newtheorem{corollary}[theorem]{Corollary}
\newtheorem{definition}[theorem]{Definition}

\newcommand{\R}{\mathbb R}
\newcommand{\Sn}{\mathbb S^{n-1}}
\newcommand{\bd}{\partial}
\newcommand{\Om}{\Omega}

\newcommand{\DN}{\Lambda}
\newcommand{\Xray}{\mathcal X}

\newcommand{\Id}{\mathrm{Id}}
\newcommand{\tr}{\operatorname{tr}}

\title{An inverse source problem for the Monge--Ampere equation from large boundary data}
\author{%
C\u{a}t\u{a}lin I. C\^{a}rstea\thanks{
Department of Applied Mathematics,
National Yang Ming Chiao Tung University,
Hsinchu 300, Taiwan, R.O.C.,
\texttt{catalin.carstea@gmail.com}}
\and
Tuhin Ghosh\thanks{
Harish-Chandra Research Institute, Homi Bhabha National Institute,
Chhatnag Road, Jhunsi, Prayagraj (Allahabad) 211 019, India,
\texttt{tuhinghosh@hri.res.in}}}
\date{}

\begin{document}

\maketitle

\begin{abstract}
We study an inverse source problem for the Monge--Ampere equation
\[
    \det D^2u=f(x)
\]
on a bounded smooth uniformly convex domain.  In the smooth classical regime, we prove that the Dirichlet-to-Neumann map associated with convex solutions determines the positive source uniquely.  The proof uses a family of large boundary values and reduces the inverse source problem to the injectivity of the Euclidean X-ray transform.
\end{abstract}

\section{Introduction}

Let \(\Om\subset\R^n\), \(n\ge2\), be a bounded smooth uniformly convex domain.  This paper concerns the inverse source problem for the Monge--Ampere equation
\begin{equation}\label{eq:MA}
    \det D^2u=f(x)\quad\text{in }\Om,
\end{equation}
where the unknown source \(f\) is positive.  We ask whether boundary measurements for the corresponding convex solutions determine \(f\).

We use the standard classical Dirichlet theory for the Monge--Ampere equation.  If
\[
    f\in C^\infty(\overline\Om),
    \qquad
    0<c_f\le f\le C_f<\infty,
\]
and \(g\in C^\infty(\partial\Om)\), then the Dirichlet problem
\begin{equation}\label{eq:MA-intro-dirichlet}
    \det D^2u=f
    \quad\text{in }\Om,
    \qquad
    u|_{\partial\Om}=g,
\end{equation}
has a unique smooth strictly convex solution \(u_g^f\).  The associated Dirichlet-to-Neumann map is
\begin{equation}\label{eq:DN-intro}
    \DN_f(g)=\partial_\nu u_g^f|_{\partial\Om},
\end{equation}
where \(\nu\) is the outward unit normal.  The forward solvability statement follows from the classical work of Caffarelli--Nirenberg--Spruck \cite{CaffarelliNirenbergSpruck1984}; see also Guti\'errez \cite{Gutierrez2016} for background on the Monge--Ampere equation.

Our main result is the following uniqueness theorem.

\begin{theorem}\label{thm:main}
Let \(\Om\subset\R^n\), \(n\ge2\), be bounded, smooth, and uniformly convex.  Let
\[
    f_1,f_2\in C^\infty(\overline\Om),
    \qquad
    0<c\le f_j\le C<\infty.
\]
If
\[
    \DN_{f_1}(g)=\DN_{f_2}(g)
    \qquad\text{for every }g\in C^\infty(\partial\Om),
\]
then
\[
    f_1=f_2\quad\text{in }\Om.
\]
\end{theorem}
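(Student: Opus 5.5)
Our approach is to use boundary data that are large perturbations of a quadratic, and to expand in the large parameter; the X-ray transform enters at the next order. Fix a unit vector \(\omega\in\Sn\) and \(\lambda\gg1\). Take the boundary value
\[
g_\lambda=\tfrac12\big(\lambda^2 (x\cdot\omega)^2+|x|^2\big)\big|_{\partial\Om}
\]
(or a close variant). The Hessian of the quadratic is \(A_\lambda=\Id+\lambda^2\,\omega\otimes\omega\), with \(\det A_\lambda=1+\lambda^2\). Dividing by \(\det A_\lambda\) suggests the rescaled solution \(u=q_\lambda+w\), where \(q_\lambda\) is the quadratic and \(w|_{\partial\Om}=0\). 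The equation \(\det(A_\lambda+D^2w)=f\) becomes
\[
\det(\Id+A_\lambda^{-1}D^2w)=\frac{f}{1+\lambda^2}.
\]
This form is inconvenient, so we rescale differently. We let the source be small relative to the data instead: replacing \(g\) by \(\lambda g\) with \(g\) fixed and strictly convex, the solution is \(u=\lambda v\) with \(\det D^2 v=\lambda^{-n}f\). This linearizes around the solution \(v_0\) of \(\det D^2v_0=0\), which is degenerate. The correct scaling is therefore anisotropic, as above: the data are large in the direction \(\omega\) and of unit size transversally.

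\textbf{Step 1 (asymptotics).} We write \(A_\lambda^{-1}=P_\omega^\perp+(1+\lambda^2)^{-1}\omega\otimes\omega\). The leading part of the equation is the operator \(\det\big(\Id+P^\perp_\omega D^2 w+O(\lambda^{-2})\big)\) acting on \(w\), and the right side is \(O(\lambda^{-2})\). It is natural to rescale \(w=\lambda^{-2}h\) plus the quadratic correction needed to make the equation consistent. To leading order this gives
\[
\tr\big(A_\lambda^{-1}D^2 w\big)\approx \frac{f}{1+\lambda^2}-1
\]
with \(w|_{\partial\Om}=0\). After a Legendre-type normalization (choosing the quadratic so that \(\det A_\lambda\) matches the mean of \(f\), i.e. working with \(\det D^2 q=c\)), we obtain
\[
\big(\Delta_{\omega^\perp}+\lambda^{-2}\partial_\omega^2\big)w\approx \frac{f-c}{c}.
\]
This is a highly anisotropic elliptic problem. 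We justify the expansion by a fixed point/implicit function argument in weighted Hölder spaces uniform in \(\lambda\), using the Caffarelli--Nirenberg--Spruck a priori estimates for the remainder.

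\textbf{Step 2 (boundary identity).} We subtract the expansions for \(f_1\) and \(f_2\). Equality of \(\DN\) gives equal normal derivatives of \(w_1-w_2\), which vanishes on \(\partial\Om\). Integrating the difference equation against test functions that solve the adjoint anisotropic operator, namely functions \(\phi(x)\) depending on \(x\cdot\omega\) and harmonic in the transversal variables (for instance \(e^{i\xi\cdot x}\) with \(\xi\perp\) suitable), gives
\[
\int_\Om (f_1-f_2)\,\phi\,dx=O(\lambda^{-1}).
\]
Letting \(\lambda\to\infty\) leaves exactly \(\int(f_1-f_2)\phi=0\). Alternatively, one can solve the degenerate limit equation \(\Delta_{\omega^\perp}w=F\) directly. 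On each slice the boundary data of \(w\) determine \(\int F\) against transversally harmonic functions, and varying \(\omega\) together with taking \(\phi=1\) on slices yields line or plane integrals of \(F=f_1-f_2\).

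\textbf{Step 3 (X-ray injectivity).} Choosing \(\phi\) concentrated near a line in direction \(\omega\), for example through a Gaussian-beam/WKB limit, yields \(\Xray(f_1-f_2)(x,\omega)=0\) for all lines. Injectivity of the Euclidean X-ray transform on compactly supported functions (extending by zero outside \(\Om\)) then gives \(f_1=f_2\).

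The main obstacle is Step 1: controlling the nonlinear remainder uniformly as \(\lambda\to\infty\) in the degenerate anisotropic regime. Uniform convexity of \(\Om\) is essential there for the boundary estimates. I would first try to rescale coordinates \(x_\omega\mapsto\lambda x_\omega\) to make the operator isotropic on a stretched domain and to apply standard Monge--Ampere estimates there.
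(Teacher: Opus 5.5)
\textbf{Genuine gap in Step 1.} Your Step 1 does not produce a valid leading-order equation, and this is where the argument breaks.

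\emph{The linearization is unjustified.} In your first form, $\det(I+A_\lambda^{-1}D^2w)=f/(1+\lambda^2)\to0$. This forces $A_\lambda^{-1}D^2w$ to be of size one, so replacing $\det(I+M)$ by $1+\operatorname{tr}M$ is unjustified.

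\emph{The normalization does not help.} Take $q=\frac12\langle Ax,x\rangle$ with $\det A=c$. The substitution $z=A^{1/2}x$ turns the problem into $\det D^2\tilde u=f/c$ with data $\frac12|z|^2$ on the stretched domain $A^{1/2}\Om$. The large parameter only changes the shape of the domain. Since $f/c-1$ is of order one, $\tilde u-\frac12|z|^2$ is not small, and linearizing about $q$ is not justified.

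\emph{The correct leading order is nonlinear for $n\ge3$.} Do the expansion honestly with your anisotropy, $A=\mathrm{diag}(\mu^2I_{n-1},\lambda^2)$ in $(y,s)$ with $\lambda^2\mu^{2(n-1)}=c$. The Schur complement shows the dominant balance is $\lambda^2\det(\mu^2I_{n-1}+D_y^2w)\approx f$. Writing $w=\mu^2\tilde w$, this becomes
\[
\det(I_{n-1}+D_y^2\tilde w)=f/c,\qquad \tilde w=0 \text{ on each slice boundary},
\]
on every transverse slice. For $n\ge3$ this is a fully nonlinear $(n-1)$-dimensional Monge--Amp\`ere problem, not the Poisson equation $\Delta_{\omega^\perp}w=(f-c)/c$. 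Consequently there is no Green identity with transversally harmonic test functions, and no linear integral transform of $f$ in the boundary data. Only for $n=2$ is the slice problem linear, because the $2\times2$ determinant is affine in $w_{yy}$. In that case your normalized data is, up to a lower-order term, the paper's data with $\omega$ and $\omega^\perp$ exchanged.

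\textbf{The missing idea.} Make the data large in all $n-1$ transverse directions and flat along $\omega$: $g=t\phi_\omega=\frac t2|P_{\omega^\perp}x|^2$. With $U_t=t\phi_\omega+t^{1-n}w$, the Schur complement gives exactly $\det D^2U_t=\partial_s^2w+O(t^{-n})$. The residual problem is then one-dimensional and linear along chords parallel to $\omega$: $\partial_s^2w=f$, with $w=0$ at the endpoints. Integrating along a chord (your $\phi\equiv1$) gives $\Xray_\omega f$ from the endpoint normal derivatives of $w$. These derivatives are the $t^{1-n}$ coefficient of $\DN_f(t\phi_\omega|_{\partial\Om})$.

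\textbf{Your justification strategy is also unavailable.} The limiting profile is degenerate, with Hessian eigenvalues of order $t$ and $t^{1-n}$. Hence the Caffarelli--Nirenberg--Spruck estimates give nothing uniform in the parameter. The paper avoids this with comparison barriers instead:
\begin{itemize}
\item $U_t\pm C t^{1-2n}b_\omega$, built from the chordwise quadratic $b_\omega$ (shown to be smooth through glancing), gives the $C^0$ bound;
\item collar barriers $U_t\pm t^{1-n-n/2}(-b_\omega+p_{x_0})$ squeeze the normal derivative on the non-glancing boundary.
\end{itemize}

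\textbf{Smaller problems in Steps 2--3.} $e^{i\xi\cdot x}$ with real $\xi\perp\omega$ is not harmonic in the transverse variables. Transversally harmonic functions also cannot concentrate near a line parallel to $\omega$. In any case the Gaussian-beam step is unnecessary once $\phi\equiv1$ yields chord integrals.
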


Inverse boundary value problems for nonlinear equations have developed in several related directions.  For semilinear elliptic and parabolic equations, early uniqueness results include \cite{Isakov1993,IsakovSylvester1994,IsakovNachman1995,Sun2010}.  Later works treat partial data, geometric settings, and power-type or gradient nonlinearities; see, for instance, \cite{FeizmohammadiOksanen2020,KrupchykUhlmann2020,LassasLiimatainenLinSalo2020,LassasLiimatainenLinSalo2021}.  A common method is to prescribe boundary values depending on small parameters and differentiate the nonlinear Dirichlet-to-Neumann map with respect to those parameters.  The resulting identities are linear inverse problems for the coefficients appearing in the nonlinear terms.  In elliptic problems this is the higher order linearization method.  Related ideas also play an important role in hyperbolic inverse problems; see, for example, \cite{KurylevLassasUhlmann2018}.

There is a parallel literature for quasilinear elliptic equations and nonlinear conductivity-type problems.  Early uniqueness results include \cite{Sun1996,SunUhlmann1997,HervasSun2002,KangNakamura2002}.  More recent results and tools include \cite{EggerPietschmannSchlottbom2014,MunozUhlmann2020,Shankar2020,CarsteaFeizmohammadi2021,CarsteaFeizmohammadi2023,KianKrupchykUhlmann2023}.  Related nonlinear geometric inverse problems include inverse problems for minimal surface type equations \cite{CarsteaLassasLiimatainenOksanen2024}.

Inverse problems for the \(p\)-Laplacian and weighted \(p\)-Laplacian form another important part of this development.  These problems are technically different from uniformly elliptic semilinear models because the linearized operators may become degenerate or singular at critical points of the background solution.  Results in this direction include boundary determination, monotonicity and enclosure methods, size estimates, and coefficient recovery; see, for instance, \cite{SaloZhong2012,BranderKarSalo2015,Brander2016,GuoKarSalo2016,BranderHarrachKarSalo2018} and \cite{KarWang2017,CarsteaKar2020,CarsteaFeizmohammadi2025}.  A related nonstandard-growth model is treated in \cite{CarsteaZimmermann2025}.  Parabolic inverse problems for degenerate nonlinear diffusion provide a further related class of nonlinear models; see, for example, \cite{CarsteaGhoshUhlmann2023,CarsteaGhoshNakamura2025,CarsteaGhosh2026}.

The closest previous work to ours is \cite{LiimatainenLin2025}.  It proves uniqueness for the same inverse source problem in two-dimensional convex Euclidean domains.  Their method starts from the full nonlinear Dirichlet-to-Neumann map and linearizes it at non-zero solutions.  If \(u\) solves \(\det D^2u=f\), then the linearized equation has coefficient matrix \(\operatorname{cof}(D^2u)\).  Since \(\operatorname{cof}(D^2u)=f(D^2u)^{-1}\), this equation has the same solutions as the non-divergence form equation \((D^2u)^{-1}_{ij}\partial_{ij}v=0\).  They then  recover the Hessian matrix \(D^2u\) from boundary measurements for this linear equation, and taking the determinant gives $f$.  

The  argument in the present paper is different. We use a large-parameter family of boundary values for which the leading term is a degenerate cylindrical profile.  The first correction satisfies a one-dimensional equation on chords of \(\Omega\), and the boundary normal derivative of this correction gives the corresponding chord integrals of the source.  Thus the inverse source problem is reduced to injectivity of the Euclidean X-ray transform. The method works in all dimensions \(n\ge2\).

Although Theorem~\ref{thm:main} is stated for the full Dirichlet-to-Neumann map, the proof uses only a restricted family of boundary values.  For \(\omega\in S^{n-1}\), let
\[
    \phi_\omega(x)=\frac12|P_{\omega^\perp}x|^2,
\]
where \(P_{\omega^\perp}\) is orthogonal projection onto \(\omega^\perp\).  We prescribe the large cylindrical boundary values
\[
    g_{t,\omega}=t\phi_\omega|_{\partial\Om},
    \qquad t\gg1.
\]
The leading profile \(t\phi_\omega\) has strong convexity in directions transverse to \(\omega\), but no convexity in the \(\omega\)-direction.  The missing longitudinal second derivative appears in the first correction.  Along each chord of \(\Omega\) parallel to \(\omega\), the correction solves a one-dimensional equation whose endpoint derivatives determine the line integral of \(f\) over that chord.  Equality of the Dirichlet-to-Neumann maps on the large cylindrical family therefore gives equality of the X-ray transforms of the two sources.  The standard Fourier-slice injectivity theorem then gives \(f_1=f_2\); see \cite{Natterer2001} for the tomographic background.

The paper is organized as follows.  Section~\ref{sec:forward} records the forward problem and the Dirichlet-to-Neumann map.  Section~\ref{sec:large-cylindrical-data} introduces the large cylindrical boundary values, the associated chord geometry, and the one-dimensional correction that appears in the asymptotic expansion.  Section~\ref{sec:large-data-asymptotics} proves the large-data asymptotic theorem.  Section~\ref{sec:recovery-line-integrals} converts the boundary asymptotic into X-ray data and recalls the injectivity needed for the final step.  Section~\ref{sec:uniqueness-proofs} proves the restricted large-data uniqueness theorem and then the main theorem.

\section{Forward problem and boundary measurements}\label{sec:forward}

We use two standard facts about convex solutions of the Monge--Ampere equation.  The first is the classical smooth Dirichlet theorem.

\begin{theorem}[{\cite[Theorem~1.1]{CaffarelliNirenbergSpruck1984}}]\label{thm:CNS-classical-dirichlet}
Let \(D\subset\R^n\) be a bounded domain with smooth strictly convex boundary.  If \(\psi\in C^\infty(\overline D)\) is positive and \(\varphi\in C^\infty(\partial D)\), then
\[
    \det D^2u=\psi
    \quad\text{in }D,
    \qquad
    u=\varphi
    \quad\text{on }\partial D
\]
has a unique strictly convex solution \(u\in C^\infty(\overline D)\). 
\end{theorem}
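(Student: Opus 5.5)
This statement is the classical smooth Dirichlet theorem of Caffarelli--Nirenberg--Spruck, which the paper quotes from the literature. The plan below reconstructs the standard proof: uniqueness by a comparison principle, existence by the continuity method with a priori estimates.

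\emph{Uniqueness.} Let $u,v$ be two strictly convex solutions, and write
\[
    0=\det D^2u-\det D^2v=a^{ij}\partial_{ij}(u-v),
    \qquad
    a^{ij}=\int_0^1\operatorname{cof}\bigl(sD^2u+(1-s)D^2v\bigr)_{ij}\,ds .
\]
The matrix $a^{ij}$ is positive definite, because a convex combination of positive definite Hessians has positive definite cofactor matrix. Since $u-v=0$ on $\partial D$, the classical maximum principle gives $u=v$.

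\emph{Existence: continuity method.} First build a strictly convex subsolution $\underline u$ with $\underline u=\varphi$ on $\partial D$ and $\det D^2\underline u\ge\psi$. Take a defining function $\rho$ of $D$ that is uniformly convex, extend $\varphi$ smoothly to $\overline D$, and set $\underline u=\varphi+A\rho$ with $A$ large; the uniform convexity of $\partial D$ is what makes this possible. Now consider the family
\[
    \det D^2u_\tau=\tau\psi+(1-\tau)\det D^2\underline u,
    \qquad u_\tau=\varphi \text{ on }\partial D,
    \qquad \tau\in[0,1].
\]
At $\tau=0$ the solution is $\underline u$. Openness of the set of solvable $\tau$ follows from the implicit function theorem in $C^{2,\alpha}$, since the linearization $\operatorname{cof}(D^2u)_{ij}\partial_{ij}$ is uniformly elliptic and invertible with Dirichlet data. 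Closedness reduces to uniform $C^{2,\alpha}(\overline D)$ bounds for $u_\tau$. Once these are known, the equation is uniformly elliptic and concave in $\log\det$, so the Evans--Krylov theorem and its boundary version (Krylov) give $C^{2,\alpha}$, and Schauder bootstrapping gives $C^\infty$.

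\emph{A priori estimates.} The bounds are obtained in the following order.
\begin{enumerate}
    \item \emph{$C^0$ bound.} By comparison, $\underline u\le u\le h$, where $h$ is the harmonic extension of $\varphi$; convexity gives the upper barrier.
    \item \emph{$C^1$ bound.} Convexity plus the barriers at the boundary control $|Du|$ on $\partial D$. Convexity then propagates this bound to the interior.
    \item \emph{Interior $C^2$ bound.} Apply the maximum principle to an auxiliary function such as $\log\partial_{\xi\xi}u+\tfrac{a}{2}|Du|^2$, reducing the global $C^2$ bound to a bound on $\partial D$.
    \item \emph{Boundary $C^2$ estimates.} This is the main obstacle. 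Tangential--tangential second derivatives come directly from the boundary data and the curvature of $\partial D$. Mixed tangential--normal derivatives are bounded by barrier arguments, using $\underline u$ together with the linearized operator applied to tangential derivative operators $\partial_k+x_k\partial_j-x_j\partial_k$. The double normal derivative $u_{\nu\nu}$ is the delicate step: one must show that the tangential Hessian of $u$ on $\partial D$ is uniformly positive definite. The CNS argument does this by examining the point where the minimal tangential eigenvalue is attained and building a further barrier there, which uses the strict convexity of $\partial D$ essentially. Once that positivity is established, the equation $\det D^2u=\psi$ yields $u_{\nu\nu}$ bounded above.
\end{enumerate}

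Combining the upper $C^2$ bound with $\det D^2u=\psi\ge c>0$ gives uniform ellipticity of the equation. This closes the continuity argument and proves existence.
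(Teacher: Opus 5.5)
Your outline is correct and is the standard Caffarelli--Nirenberg--Spruck argument, which the paper simply cites without giving a proof of its own. The components all match that argument: uniqueness via the positive definite averaged cofactor operator, existence by the continuity method from the subsolution $\varphi+A\rho$, $C^0$/$C^1$ bounds from barriers and convexity, reduction of the $C^2$ bound to $\partial D$, the double normal estimate at the point of minimal tangential eigenvalue as the crux, and Evans--Krylov/Krylov plus Schauder for higher regularity.

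One correction is needed in the mixed-derivative step: the tangential field must carry the boundary curvatures. Take, e.g., $T_\alpha=\partial_\alpha+\sum_{\beta<n}B_{\alpha\beta}(x_\beta\partial_n-x_n\partial_\beta)$, where near the point $\partial D=\{x_n=\tfrac12\sum_{\alpha,\beta<n}B_{\alpha\beta}x_\alpha x_\beta+O(|x'|^3)\}$. This gives $T_\alpha(u-\varphi)=O(|x'|^2)$ on $\partial D$. With the unit coefficient you wrote, $T(u-\varphi)$ in general vanishes only to first order there, and the barrier cannot absorb that.
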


The second is the comparison principle for the Monge--Ampere measure.  If \(u\) is convex, we write \(M_u\) for its Monge--Ampere measure; when \(u\in C^2\), this measure is \(M_u=(\det D^2u)\,dx\).

\begin{theorem}[{\cite[Theorem~1.4.6]{Gutierrez2016}}]\label{thm:MA-comparison}
Let \(D\subset\R^n\) be open and bounded, and let \(u,v\in C(\overline D)\) be convex in \(D\).  If \(M_u\le M_v\) in \(D\), then
\[
    \min_{\overline D}(u-v)=\min_{\partial D}(u-v).
\]
In particular, if \(u\ge v\) on \(\partial D\), then \(u\ge v\) in \(D\).
\end{theorem}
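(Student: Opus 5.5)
The statement to prove is the comparison principle, Theorem~\ref{thm:MA-comparison}. It is cited from \cite{Gutierrez2016}, but I would reprove it by the classical argument: perturb to get strict inequality of Monge--Ampere measures, then use the monotonicity of subdifferentials. The inequality \(\min_{\overline D}(u-v)\le\min_{\partial D}(u-v)\) is trivial, so it suffices to show that \(u-v\) cannot attain a strictly smaller value in the interior.

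\textbf{Step 1: perturbation.} Argue by contradiction. Suppose
\[
    m:=\min_{\overline D}(u-v)<\min_{\partial D}(u-v)=:m_\partial .
\]
Set \(d=\operatorname{diam}D\), fix \(x_0\in D\), and choose \(\varepsilon>0\) so small that \(\varepsilon d^2<m_\partial-m\). Define
\[
    w(x)=v(x)+\varepsilon|x-x_0|^2-\varepsilon d^2 .
\]
Then \(w\) is convex and continuous on \(\overline D\). Its Monge--Ampere measure satisfies \(M_w\ge M_v+(2\varepsilon)^n|\cdot|\), by superadditivity of the Monge--Ampere measure under addition of a smooth strictly convex function. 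This follows from Minkowski's determinant inequality when \(v\in C^2\), and in general by approximation, or directly from \(\partial w(x)\supseteq\partial v(x)+2\varepsilon(x-x_0)\) together with the Brunn--Minkowski inequality.

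\textbf{Step 2: the set where \(u<w\).} On \(\partial D\) we have \(u-w\ge u-v\ge m_\partial\). At an interior minimum point \(\bar x\) of \(u-v\) we have
\[
    (u-w)(\bar x)\le m+\varepsilon d^2<m_\partial .
\]
Add a constant \(c\) to \(w\) so that the open set \(G=\{u<w\}\) is nonempty and \(\overline G\subset D\), with \(u=w\) on \(\partial G\). Adding a constant does not change \(M_w\).

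\textbf{Step 3: the key lemma (main obstacle).} I need the following fact: if \(u,w\) are convex on a bounded open \(G\), \(u=w\) on \(\partial G\), and \(u<w\) in \(G\), then
\[
    \partial w(G)\subseteq\partial u(G).
\]
The proof uses supporting hyperplanes. Take \(p\in\partial w(x_1)\) and the affine function \(\ell(x)=w(x_1)+p\cdot(x-x_1)\), so \(\ell\le w\). Lower \(\ell\) by the constant \(a=\max_{\overline G}(\ell-u)>0\); this is positive at \(x_1\) since \(u(x_1)<w(x_1)\). The shifted plane \(\ell-a\) touches \(u\) from below at some point of \(\overline G\). That point cannot lie on \(\partial G\), since there \(\ell-a<\ell\le w=u\). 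Hence it is interior, and so \(p\in\partial u(G)\).

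\textbf{Step 4: conclusion.} By Step 3,
\[
    M_w(G)\le M_u(G)\le M_v(G)<M_v(G)+(2\varepsilon)^n|G|\le M_w(G),
\]
which is a contradiction. The only delicate points are the measure-theoretic ones. Here \(M_u(E)\) is the Lebesgue measure of \(\partial u(E)\), and its countable additivity requires that the set of slopes belonging to two different points has measure zero. I would take that as the standard Alexandrov fact.
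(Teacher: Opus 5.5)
The paper does not prove this statement itself, since it is quoted from \cite[Theorem~1.4.6]{Gutierrez2016}. Your argument is the standard proof of this comparison principle, as given in that reference: perturb $v$ by $\varepsilon|x-x_0|^2$, show $\partial w(G)\subseteq\partial u(G)$ on $G=\{u<w\}$, and contradict the superadditivity $M_w\ge M_v+(2\varepsilon)^n\,dx$. So it is correct.

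One small correction concerns your justification of superadditivity. Use Minkowski's determinant inequality for smooth $v$ plus weak continuity of Monge--Amp\`ere measures under uniform approximation. Drop the ``direct'' Brunn--Minkowski alternative, because $\partial w(E)=\bigcup_{x\in E}\bigl(\partial v(x)+2\varepsilon(x-x_0)\bigr)$ is not the Minkowski sum $\partial v(E)+2\varepsilon(E-x_0)$.
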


We use the following convention throughout the paper.  The domain \(\Om\subset\R^n\) is bounded, smooth, and uniformly convex.  The outward unit normal is denoted by \(\nu\).  Sources belong to the class
\begin{equation}\label{eq:source-class}
    \mathcal F
    =
    \left\{
        f\in C^\infty(\overline\Om):
        0<c_f\le f\le C_f<\infty
    \right\}.
\end{equation}
For \(f\in\mathcal F\), Theorem~\ref{thm:CNS-classical-dirichlet} gives, for every \(g\in C^\infty(\bd\Om)\), a unique smooth strictly convex solution \(u_g^f\in C^\infty(\overline\Om)\) of
\begin{equation}\label{eq:admissible-dirichlet-problem}
    \det D^2u=f
    \quad\text{in }\Om,
    \qquad
    u=g
    \quad\text{on }\bd\Om .
\end{equation}
The Dirichlet-to-Neumann map is
\begin{equation}\label{eq:DN-definition}
    \DN_f:g\mapsto \partial_\nu u_g^f|_{\bd\Om}.
\end{equation}

The paper is written in this smooth framework to keep the inverse problem separate from lower-regularity trace issues.  A lower-regularity formulation would require replacing \eqref{eq:DN-definition} by a boundary trace statement justified by additional regularity.

\section{Large cylindrical data and the chordwise correction}\label{sec:large-cylindrical-data}

We now introduce the special boundary values used in the proof of uniqueness.  The point of this section is to explain why large cylindrical data should produce chord integrals of the source.  The rigorous asymptotic estimate is stated at the end of the section and proved in the next one.

\subsection{Large cylindrical boundary values}

For \(\omega\in\Sn\), let
\[
    P_{\omega^\perp}=\Id-\omega\otimes\omega.
\]
The cylindrical profile is
\begin{equation}\label{eq:phi-omega}
    \phi_\omega(x)=\frac12|P_{\omega^\perp}x|^2.
\end{equation}
The large cylindrical trace family is
\begin{equation}\label{eq:cylindrical-trace-family}
    \mathcal C_{T}
    =
    \{g_{t,\omega}=t\phi_\omega|_{\bd\Om}:t\ge T,\ \omega\in\Sn\}.
\end{equation}
For \(f\in\mathcal F\), we write \(u_{t,\omega}^f\) for the solution of
\begin{equation}\label{eq:large-data-equation-intro}
    \det D^2u_{t,\omega}^f=f
    \quad\text{in }\Om,
    \qquad
    u_{t,\omega}^f=g_{t,\omega}
    \quad\text{on }\bd\Om.
\end{equation}
The corresponding measurements are
\begin{equation}\label{eq:DN-large-family}
    \DN_f(g_{t,\omega})
    =
    \partial_\nu u_{t,\omega}^f|_{\bd\Om}.
\end{equation}

\subsection{Chord geometry}\label{subsec:chord-geometry}

Fix \(\omega\in\Sn\).  Let
\[
    \Pi_\omega=\omega^\perp,
    \qquad
    \Omega_\omega=P_{\omega^\perp}\Omega\subset \Pi_\omega .
\]
For \(y\in\Omega_\omega\), set
\begin{equation}\label{eq:s-plus-minus-def}
\begin{aligned}
    s_+(y)&=\sup\{s\in\R:y+s\omega\in\Omega\},\\
    s_-(y)&=\inf\{s\in\R:y+s\omega\in\Omega\}.
\end{aligned}
\end{equation}
The corresponding endpoints of the chord through $y$ are
\begin{equation}\label{eq:chord-endpoints}
    x_\pm(y)=y+s_\pm(y)\omega\in\bd\Om.
\end{equation}
We also introduce the incoming, outgoing, and glancing boundary sets
\begin{equation}\label{eq:incoming-outgoing-glancing}
\begin{aligned}
    \Gamma_\omega^+&=\{x\in\bd\Om:\omega\cdot\nu(x)>0\},\\
    \Gamma_\omega^-&=\{x\in\bd\Om:\omega\cdot\nu(x)<0\},\\
    \Gamma_\omega^{\mathrm{gl}}&=\{x\in\bd\Om:\omega\cdot\nu(x)=0\}.
\end{aligned}
\end{equation}
Thus
\[
    \Gamma_\omega=\Gamma_\omega^+\cup\Gamma_\omega^-
    =\bd\Om\setminus\Gamma_\omega^{\mathrm{gl}}
\]
is the non-glancing boundary for the direction \(\omega\).

\begin{lemma}\label{lem:chord-parametrization}
Let \(\Om\subset\R^n\) be bounded, smooth, and strictly convex.  Then \(\Omega_\omega\) is an open subset of \(\Pi_\omega\).  For every \(y\in\Omega_\omega\),
\begin{equation}\label{eq:chord-interval}
    \Omega\cap (y+\R\omega)
    =
    \{y+s\omega:s_-(y)<s<s_+(y)\},
\end{equation}
with \(s_-(y)<s_+(y)\).  Moreover \(x_\pm(y)\in\Gamma_\omega^\pm\), that is,
\begin{equation}\label{eq:normal-signs}
    \omega\cdot \nu(x_+(y))>0,
    \qquad
    \omega\cdot \nu(x_-(y))<0.
\end{equation}
The maps
\[
    y\mapsto s_\pm(y),
    \qquad
    y\mapsto x_\pm(y),
\]
are smooth on \(\Omega_\omega\).  Equivalently, the projection map \(P_{\omega^\perp}\) restricts to smooth diffeomorphisms
\[
    \Gamma_\omega^\pm \longrightarrow \Omega_\omega,
\]
with inverses \(y\mapsto x_\pm(y)\).
\end{lemma}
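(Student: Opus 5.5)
Openness of \(\Omega_\omega\) is immediate, since \(P_{\omega^\perp}\) restricted to \(\R^n\to\Pi_\omega\) is an open map: it is a linear surjection. For \(y\in\Omega_\omega\), the set \(I_y=\{s:y+s\omega\in\Omega\}\) is nonempty, open, and bounded. It is also convex, because \(\Omega\) is convex. Hence \(I_y=(s_-(y),s_+(y))\) with \(s_-<s_+\), which is \eqref{eq:chord-interval}. Since \(\Omega\) is bounded and open, the endpoints \(x_\pm(y)\) lie in \(\bd\Om\setminus\Omega=\bd\Om\).

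For the sign condition \eqref{eq:normal-signs}, write \(\Omega=\{\rho<0\}\) with \(\rho\) a smooth defining function, \(\nabla\rho\neq0\) on \(\bd\Om\), and \(\nu=\nabla\rho/|\nabla\rho|\). Set \(h(s)=\rho(y+s\omega)\). Then \(h<0\) on \((s_-,s_+)\) and \(h(s_+)=0\), so \(h'(s_+)=|\nabla\rho|\,\omega\cdot\nu(x_+)\ge0\). To rule out \(h'(s_+)=0\) I use strict convexity. If \(\omega\cdot\nu(x_+)=0\), then \(\omega\) is tangent at \(x_+\). Strict convexity of a convex domain means a tangent line meets \(\overline\Om\) only at the point of tangency, or equivalently that the supporting hyperplane touches \(\overline\Om\) only at \(x_+\). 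But the whole segment \([x_-,x_+]\subset\overline\Om\) lies in the tangent line through \(x_+\) in direction \(\omega\), and this segment has positive length. That is a contradiction. Under uniform convexity one can argue quantitatively instead: the second fundamental form is positive, so \(\rho\) grows quadratically along tangent directions, and hence \(h(s)>0\) for \(s\) slightly less than \(s_+\). The case of \(x_-\) is symmetric.

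Smoothness comes from the implicit function theorem. The map \(F(y,s)=\rho(y+s\omega)\) is smooth on \(\Pi_\omega\times\R\), and \(\partial_sF(y,s_\pm(y))=h'(s_\pm)\neq0\). So near each \(y_0\) there is a unique smooth solution \(\tilde s(y)\) of \(F=0\) close to \(s_\pm(y_0)\). I then need \(\tilde s=s_\pm\) near \(y_0\). Since \(\overline\Om\cap(y+\R\omega)\) is a segment, the zero set of \(h_y\) is exactly \(\{s_-(y),s_+(y)\}\). These two zeros are uniformly separated near \(y_0\). Continuity of \(s_\pm\) follows from openness of \(\Omega\) (lower semicontinuity of \(s_+\)) together with closedness of \(\overline\Om\) and boundedness (upper semicontinuity). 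Given continuity, the identification \(\tilde s=s_\pm\) follows. This continuity/identification step is the main technical point, and it is routine. Smoothness of \(x_\pm\) then follows.

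Finally, \(P_{\omega^\perp}\circ x_\pm=\Id\) on \(\Omega_\omega\), so the maps \(x_\pm\) are injective immersions into \(\Gamma_\omega^\pm\). Next I check surjectivity onto \(\Gamma^\pm_\omega\). Take \(x\in\Gamma^+_\omega\). Since \(\omega\cdot\nu(x)>0\), the point \(x-\epsilon\omega\) lies in \(\Omega\) for small \(\epsilon>0\). So \(y=P_{\omega^\perp}x\in\Omega_\omega\), and \(x\) is a boundary point of the chord with \(s\) maximal, that is, \(x=x_+(y)\). The inverse is the smooth map \(P_{\omega^\perp}|_{\Gamma_\omega^\pm}\). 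Its differential is injective on \(T_x\bd\Om\), because \(\omega\notin T_x\bd\Om\). Hence both maps are diffeomorphisms.
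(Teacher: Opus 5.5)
Your proof is correct and follows essentially the same route as the paper: openness of the projection, convexity of the slice \(I_y\), strict convexity to rule out \(\omega\) being tangent at \(x_\pm(y)\), the crossing-direction sign of \(\tfrac{d}{ds}\rho(y+s\omega)\), the implicit function theorem, and the identity \(P_{\omega^\perp}\circ x_\pm=\Id\). You also spell out two points the paper leaves implicit: continuity of \(s_\pm\), which is needed to identify the implicit-function branch with \(s_\pm\), and surjectivity of \(x_\pm\) onto \(\Gamma_\omega^\pm\); both arguments are sound.
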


\begin{proof}
Since \(P_{\omega^\perp}:\R^n\to\Pi_\omega\) is an open map, \(\Omega_\omega\) is open.  For fixed \(y\in\Omega_\omega\), the set
\[
    I_y=\{s\in\R:y+s\omega\in\Omega\}
\]
is a nonempty bounded open interval because \(\Omega\) is open, bounded, and convex.  This proves \eqref{eq:chord-interval} and gives \(s_-(y)<s_+(y)\).  Since \(\Omega\) is bounded and open, the endpoints \(x_\pm(y)=y+s_\pm(y)\omega\) lie on \(\bd\Om\).

We next prove non-glancing and the signs.  Let \(\rho\) be a smooth defining function with
\[
    \Omega=\{\rho<0\},
    \qquad
    \bd\Omega=\{\rho=0\},
    \qquad
    \nabla\rho=|\nabla\rho|\nu\quad\text{on }\bd\Omega .
\]
If \(\omega\cdot\nu(x_+(y))=0\), then the line \(x_+(y)+\R\omega\) is contained in the tangent hyperplane to \(\bd\Omega\) at \(x_+(y)\).  By strict convexity, this supporting hyperplane meets \(\overline\Omega\) only at \(x_+(y)\).  This contradicts the fact that \(y+s\omega\in\Omega\) for \(s<s_+(y)\) close to \(s_+(y)\).  Hence \(\omega\cdot\nu(x_+(y))\ne0\).  Similarly \(\omega\cdot\nu(x_-(y))\ne0\).

The sign follows from the crossing direction.  The function \(s\mapsto \rho(y+s\omega)\) is negative on \((s_-(y),s_+(y))\) and zero at the endpoints.  Hence
\[
    \frac{d}{ds}\rho(y+s\omega)\bigg|_{s=s_+(y)}>0,
    \qquad
    \frac{d}{ds}\rho(y+s\omega)\bigg|_{s=s_-(y)}<0.
\]
Since this derivative equals \(|\nabla\rho|\nu\cdot\omega\) on the boundary, \eqref{eq:normal-signs} follows.

Finally, because \(\partial_s\rho(y+s_\pm(y)\omega)\ne0\), the implicit function theorem gives smoothness of \(s_\pm\).  The diffeomorphism statement follows from the identity
\[
    P_{\omega^\perp}(x_\pm(y))=y
\]
and uniqueness of the two endpoints on each chord.
\end{proof}

\subsection{The chordwise correction}\label{subsec:chordwise-correction}

For \(f\in C^\infty(\overline\Om)\), define \(w_\omega^f\) chordwise by
\begin{equation}\label{eq:w-correction}
    \partial_s^2 w_\omega^f(y,s)
    =
    f(y+s\omega),
    \qquad
    w_\omega^f(y,s_\pm(y))=0 .
\end{equation}
It is useful to introduce the chord length
\begin{equation}\label{eq:chord-length}
    \ell_\omega(y)=s_+(y)-s_-(y).
\end{equation}
Solving the one-dimensional Dirichlet problem on each chord gives the explicit formula
\begin{equation}\label{eq:w-explicit}
\begin{aligned}
    w_\omega^f(y,s)
    &=
    \int_{s_-(y)}^s (s-r) f(y+r\omega)\,dr \\
    &\quad
    -
    \frac{s-s_-(y)}{\ell_\omega(y)}
    \int_{s_-(y)}^{s_+(y)}
        (s_+(y)-r) f(y+r\omega)\,dr .
\end{aligned}
\end{equation}
In particular \(w_\omega^f\) is smooth in the open set
\[
    \{(y,s):y\in\Omega_\omega,\ s_-(y)<s<s_+(y)\},
\]
after identifying \((y,s)\) with \(y+s\omega\).  The following estimates record precisely what will be used later.

\begin{lemma}\label{lem:chordwise-C0-bounds}
Let \(f\in C(\overline\Om)\).  For every \(\omega\in\Sn\), every \(y\in\Omega_\omega\), and every \(s\in[s_-(y),s_+(y)]\),
\begin{equation}\label{eq:chordwise-basic-bounds}
    |w_\omega^f(y,s)|
    \le
    \frac18 \|f\|_{L^\infty(\Om)}\ell_\omega(y)^2,
    \qquad
    |\partial_s w_\omega^f(y,s)|
    \le
    \|f\|_{L^\infty(\Om)}\ell_\omega(y),
\end{equation}
and
\begin{equation}\label{eq:chordwise-second-s-bound}
    |\partial_s^2w_\omega^f(y,s)|
    \le
    \|f\|_{L^\infty(\Om)}.
\end{equation}
Consequently \(w_\omega^f=O(\ell_\omega^2)\) and \(\partial_s w_\omega^f=O(\ell_\omega)\) as the chord length tends to zero.
\end{lemma}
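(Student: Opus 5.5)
The plan is to work on a single chord: fix \(\omega\) and \(y\in\Omega_\omega\), and write \(a=s_-(y)\), \(b=s_+(y)\), \(\ell=b-a\), \(h(s)=f(y+s\omega)\) and \(M=\|f\|_{L^\infty(\Om)}\). Then \(w(s)=w_\omega^f(y,s)\) is the unique solution of \(w''=h\) on \([a,b]\) with \(w(a)=w(b)=0\), given by \eqref{eq:w-explicit}. The estimate \eqref{eq:chordwise-second-s-bound} is immediate from \(w''=h\), since \(|h|\le M\) and \(y+s\omega\in\overline\Om\) for \(s\in[a,b]\). At the endpoints I would read the equation by continuity, using the explicit formula, which is \(C^2\) up to \(s=a,b\) because \(f\) is continuous on \(\overline\Om\).

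For the bound on \(w\), I would use the Green's function representation
\[
    w(s)=-\int_a^b G(s,r)\,h(r)\,dr,
    \qquad
    G(s,r)=\frac{(\min(s,r)-a)(b-\max(s,r))}{\ell}\ge0.
\]
This agrees with \eqref{eq:w-explicit}, and one checks it by verifying that the right side satisfies \(w''=h\) together with the two boundary conditions. It follows that
\[
    |w(s)|\le M\int_a^b G(s,r)\,dr=\frac M2 (s-a)(b-s)\le \frac{M\ell^2}{8}.
\]
A cleaner route is comparison: the function \(q(s)=\frac M2 (s-a)(b-s)\) satisfies \(q''=-M\le -|h|\) and vanishes at both endpoints. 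Hence \((w\pm q)''=h\mp M\) has a fixed sign, and the one-dimensional maximum principle (convex or concave functions vanishing at both endpoints) gives \(|w|\le q\le M\ell^2/8\).

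For the derivative bound, I would argue via Rolle. Since \(w(a)=w(b)=0\), there is \(\sigma\in(a,b)\) with \(w'(\sigma)=0\). Then
\[
    w'(s)=\int_\sigma^s h(r)\,dr,
    \qquad\text{so}\qquad
    |w'(s)|\le M|s-\sigma|\le M\ell
\]
for all \(s\in[a,b]\). Alternatively, one can differentiate \eqref{eq:w-explicit} directly: \(w'(s)=\int_a^s h\,dr-\ell^{-1}\int_a^b (b-r)h\,dr\). Each of the two terms lies in \([-M\ell, M\ell]\) on its own, but the two have opposite roles, so the Rolle argument is the one I would use.

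The final sentence of the lemma follows directly, since \(M\) does not depend on \(y\) or \(\omega\). I do not expect a real obstacle here. The only delicate point is justifying the equation up to the endpoints, which are boundary points of \(\Om\), and that is settled by the explicit formula and the continuity of \(f\) on \(\overline\Om\).
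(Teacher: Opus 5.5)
Your proof is correct. For the bounds on \(w_\omega^f\) and \(\partial_s^2 w_\omega^f\) it follows the paper's argument. The paper also writes \(w\) through the Dirichlet Green function on the chord, uses \(\int_a^b|G(s,r)|\,dr\le \ell^2/8\), and reads the second-derivative bound off the equation; your comparison with \(\frac M2(s-a)(b-s)\) is an equivalent repackaging. The one real difference is the first-derivative bound. You use Rolle to find \(\sigma\in(a,b)\) with \(\partial_s w(\sigma)=0\) and integrate \(\partial_s^2w=h\) from \(\sigma\), which needs only \(w(a)=w(b)=0\) and \(|\partial_s^2 w|\le M\). The paper instead differentiates the explicit formula. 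That route loses nothing once the two terms are merged into a single kernel:
\[
    \partial_s w(s)=\int_a^b K(s,r)h(r)\,dr,
    \qquad
    K(s,r)=
    \begin{cases}
        (r-a)/\ell, & r<s,\\
        -(b-r)/\ell, & r>s.
    \end{cases}
\]
Hence \(|K|\le1\), and in fact \(\int_a^b|K(s,r)|\,dr=\bigl((s-a)^2+(b-s)^2\bigr)/(2\ell)\le \ell/2\), which gives the sharper bound \(|\partial_s w|\le \frac12 M\ell\). Your Rolle argument is shorter and does not need the formula. The kernel computation is what actually justifies the paper's terse ``which implies'': the triangle inequality applied to the two terms separately gives at best \(\frac32 M\ell\). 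It also yields the better constant.
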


\begin{proof}
For fixed \(y\), write \(a=s_-(y)\), \(b=s_+(y)\), and \(\ell=b-a\).  The Dirichlet Green function for \(\partial_s^2\) on \((a,b)\) satisfies
\[
    |G(s,r)|\le \frac{\ell}{4},
    \qquad
    \int_a^b |G(s,r)|\,dr\le \frac{\ell^2}{8}.
\]
Equivalently, these bounds follow directly from \eqref{eq:w-explicit}.  Thus
\[
    |w(y,s)|\le \|f\|_{L^\infty}\frac{\ell^2}{8}.
\]
Differentiating \eqref{eq:w-explicit} with respect to \(s\) gives
\[
    \partial_s w(y,s)
    =
    \int_a^s f(y+r\omega)\,dr
    -
    \frac1\ell
    \int_a^b (b-r)f(y+r\omega)\,dr,
\]
which implies \(|\partial_s w(y,s)|\le \ell\|f\|_{L^\infty}\).  The last bound is just the equation \(\partial_s^2w=f\).
\end{proof}

If a boundary point is glancing for the direction \(\omega\), then it is the limit of endpoints of chords whose lengths tend to zero.  The bound \eqref{eq:chordwise-basic-bounds} implies \(w_\omega^f=O(\ell_\omega^2)\) along such chords.  Thus \(w_\omega^f\) extends continuously to the glancing boundary by setting it equal to zero there.  Whenever we write \(t\phi_\omega+t^{1-n}w_\omega^f=t\phi_\omega\) on all of \(\bd\Om\), this extension is understood.  For smooth \(f\), Lemma~\ref{lem:global-glancing-smoothness} below gives a stronger conclusion: for each fixed direction \(\omega\), the apparent singularities of the endpoint parametrization cancel and \(w_\omega^f\) extends smoothly through glancing.

\subsection{The large-data asymptotic}\label{subsec:large-data-asymptotic-statement}

Fix \(\omega\in S^{n-1}\) and let \(\phi_\omega\) be defined by \eqref{eq:phi-omega}.  For \(t\gg1\), let \(u_{t,\omega}^f\) be the classical convex solution of
\begin{equation}\label{eq:large-data-problem}
    \det D^2u_{t,\omega}^f=f
    \quad\text{in }\Om,
    \qquad
    u_{t,\omega}^f=t\phi_\omega
    \quad\text{on }\bd\Om.
\end{equation}

We make the Ansatz
\begin{equation}\label{eq:large-data-formal}
    u_{t,\omega}^f
    =
    t\phi_\omega+t^{1-n}w_\omega^f+r_{t,\omega}^f.
\end{equation}
Indeed, in coordinates \(x=y+s\omega\),
\[
    D^2(t\phi_\omega+t^{1-n}w)
    =
    \begin{pmatrix}
        tI_{n-1}+t^{1-n}D_y^2w
        &
        t^{1-n}D_y\partial_s w \\[1mm]
        t^{1-n}\partial_sD_yw
        &
        t^{1-n}\partial_s^2w
    \end{pmatrix}.
\]
A determinant expansion gives
\begin{equation}\label{eq:det-expansion}
    \det D^2(t\phi_\omega+t^{1-n}w)
    =
    \partial_s^2w+O(t^{-n})
\end{equation}
provided \(w\) has bounded second derivatives.  Lemma~\ref{lem:non-glancing-regularity} supplies such bounds for \(w_\omega^f\) on regions separated from glancing.  Therefore the leading correction should satisfy \(\partial_s^2w=f\), with zero endpoint values on every chord.

\begin{theorem}\label{thm:large-data-asymptotic}
Let \(\Om\subset\R^n\) be bounded, smooth, and uniformly convex, and let \(f\in C^\infty(\overline\Omega)\) satisfy \(f\ge c_f>0\).  Fix \(\omega\in S^{n-1}\).  Let \(u_{t,\omega}^f\) solve \eqref{eq:large-data-problem}, and let \(w_\omega^f\) be defined by \eqref{eq:w-correction}.  Then there are constants \(C_\omega,T_\omega<\infty\) such that, for all \(t\ge T_\omega\),
\begin{equation}\label{eq:bulk-asymptotic}
    \|u_{t,\omega}^f-(t\phi_\omega+t^{1-n}w_\omega^f)\|_{L^\infty(\Omega)}
    \le
    C_\omega t^{1-2n}.
\end{equation}
Moreover, for every compact set \(K\Subset\Gamma_\omega\), there are constants \(C_K,T_K<\infty\) such that, for all \(t\ge T_K\),
\begin{equation}\label{eq:DN-asymptotic}
    \sup_{x\in K}
    \left|
    \partial_\nu u_{t,\omega}^f(x)
    -
    t\partial_\nu\phi_\omega(x)
    -
    t^{1-n}\partial_\nu w_\omega^f(x)
    \right|
    \le
    C_Kt^{1-n-n/2}.
\end{equation}
In particular, the error in \eqref{eq:DN-asymptotic} is \(o(t^{1-n})\) locally uniformly on \(\Gamma_\omega\).  The constants are not asserted to be uniform as \(\omega\) varies.
\end{theorem}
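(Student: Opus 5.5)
We plan to prove \eqref{eq:bulk-asymptotic} by a sub/supersolution argument built on Theorem~\ref{thm:MA-comparison}, and then deduce \eqref{eq:DN-asymptotic} from it by interpolation near the non-glancing boundary.

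\textbf{Step 1: global regularity of the corrector.} First we will establish that \(w=w_\omega^f\) extends to a function in \(C^2(\overline\Om)\), indeed smooth up to the boundary including the glancing set; this is the content announced as Lemma~\ref{lem:global-glancing-smoothness}. Away from glancing this follows from the explicit formula \eqref{eq:w-explicit} and Lemma~\ref{lem:chord-parametrization}. Near glancing we will use uniform convexity: the chord endpoints \(s_\pm(y)\) behave like \(c(y)\pm\sqrt{\text{distance to }\partial\Om_\omega}\). The expression \eqref{eq:w-explicit} is symmetric under \(s_+\leftrightarrow s_-\), so it is a smooth function of \(s_++s_-\) and \((s_+-s_-)^2\), and these are smooth in \(y\) by a Weierstrass-preparation-type argument.

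\textbf{Step 2: barriers.} Define
\[
v_\pm=t\phi_\omega+t^{1-n}w\pm A t^{1-2n}(h-M),
\qquad h(x)=\tfrac12|x|^2,\quad M=\max_{\overline\Om}h .
\]
Both barriers equal \(t\phi_\omega\) on \(\bd\Om\) up to an \(O(t^{1-2n})\) boundary error; to avoid it, we instead use a uniformly convex defining function \(\rho\le0\) of \(\Om\) in place of \(h-M\). Put \(\varepsilon=t^{1-2n}\) and set \(v_\pm=t\phi_\omega+t^{1-n}w\mp A\varepsilon\rho\), which has exactly the right boundary values.

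Compute the Hessian in block form as in \eqref{eq:det-expansion}. The upper block is \(tI+O(t^{1-n})\). The lower entry is \(t^{1-n}(f\mp A t^{-n}\rho_{ss})\), and the off-diagonal entries are \(O(t^{1-n})\). The determinant is therefore
\[
t^{n-1}\Bigl(t^{1-n}f\mp At^{1-2n}\rho_{ss}-O(t^{2-2n}/t)\Bigr)\bigl(1+O(t^{-n})\bigr).
\]
Its value is \(f\mp A\rho_{ss}t^{-n}+O(t^{-n})\). Since \(\rho_{ss}\ge c>0\), choosing \(A\) large gives \(\det D^2v_+\le f\le\det D^2v_-\) for \(t\) large, with both barriers convex. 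Theorem~\ref{thm:MA-comparison} then yields \(v_-\le u\le v_+\), which is \eqref{eq:bulk-asymptotic} with constant \(A\max|\rho|\). All of this relies on Step 1, since \(D^2w\) must be bounded on all of \(\overline\Om\).

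\textbf{Step 3: the normal derivative.} Let \(r=u-t\phi_\omega-t^{1-n}w\). We know \(r=0\) on \(\bd\Om\) and \(|r|\le Ct^{1-2n}\). To control \(\partial_\nu r\) on \(K\), we use the barriers again: \(v_-\le u\le v_+\) with equality on \(\bd\Om\) already gives \(|\partial_\nu r|\le A t^{1-2n}|\nabla\rho|\). This is better than claimed, but it assumes that comparison with touching at the boundary transfers to normal derivatives, which holds since all functions agree on \(\bd\Om\).

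If Step 1 fails to give global \(C^2\), the fallback is to truncate to the region separated from glancing and recover the weaker rate \(t^{1-n-n/2}\). That route works as follows. Take barriers on a boundary cap \(K'\supset K\) of width \(\delta\), and control the error on the lateral part via the \(L^\infty\) estimate and a separated cutoff \(\eta\). Then interpolate: \(\|\nabla r\|\lesssim \|r\|_\infty/\delta+\delta\|D^2 r\|\), where \(\|D^2r\|\lesssim t\), and optimize \(\delta=t^{-n/2}\)-type choices to produce \(t^{1-n-n/2}\).

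\textbf{Main obstacle.} We expect the main obstacle to be the behavior at glancing, namely either proving Step 1, or making the bulk barrier work when \(D^2w\) is controlled only away from glancing. For the latter, we would use the bounds of Lemma~\ref{lem:chordwise-C0-bounds} together with the fact that short chords carry only \(O(\ell^2)\) corrections.
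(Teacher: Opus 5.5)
Your proof is correct. Steps 1 and 2 follow the paper's structure. Step 3 takes a genuinely shorter route than the paper and gives a sharper result.

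\textbf{Steps 1--2.} The paper proves global smoothness of $w_\omega^f$ through glancing the way you sketch. It writes $s_\pm=a(y)\pm\sqrt{\beta(y)}$ via Malgrange preparation and uses that a smooth even function of $\lambda$ is a smooth function of $\lambda^2$. It then gets the $C^0$ bound from global barriers $U_t\pm C_*t^{1-2n}b_\omega$, where $b_\omega=\frac12(s-s_-)(s-s_+)$ satisfies $\partial_s^2b_\omega\equiv1$. You use a uniformly convex defining function $\rho$ with $\partial_s^2\rho\ge c>0$ instead. This needs the standard existence of such $\rho$, and the order of choices: first fix $A$ larger than the $A$-independent $O(t^{-n})$ determinant defect $f\,\mathrm{tr}(D_y^2w)-|D_y\partial_sw|^2$, then take $t$ large. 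Otherwise it works identically, and it spares you proving smoothness of $b_\omega$ through glancing.

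\textbf{Step 3.} The paper builds a second layer of local barriers $U_t\pm t^{1-n-n/2}G_{x_0}$ on one-sided collars in the non-glancing region. It feeds in the $C^0$ bound on the artificial boundary and balances $\varepsilon_t=t^{-n/2}$ against the $O(t^{-n})$ defect. This gives only the rate $t^{1-n-n/2}$, and only on compact sets $K\Subset\Gamma_\omega$.

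Your observation is valid. The global barriers already coincide with $u_{t,\omega}^f$ on all of $\partial\Omega$, and all three functions are $C^1$ up to the boundary. So $v_-\le u_{t,\omega}^f\le v_+$ squeezes the outward normal derivatives, giving $|\partial_\nu u_{t,\omega}^f-\partial_\nu U_t|\le At^{1-2n}\|\nabla\rho\|_{L^\infty(\partial\Omega)}$ on the whole boundary, glancing points included. Since $t^{1-2n}\le t^{1-n-n/2}$ for $t\ge1$, this implies \eqref{eq:DN-asymptotic}.

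The same squeeze works verbatim with the paper's barriers, because $b_\omega\in C^\infty(\overline\Omega)$ has bounded normal derivative. The collar construction is therefore redundant. Its only merit is robustness: it uses the global comparison only through the $C^0$ bound, so it would survive barriers that matched the boundary data only approximately.

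\textbf{The fallback.} You should drop the fallback in Step 3; it would not work. Take the interpolation $\|\nabla r\|\lesssim\|r\|_\infty/\delta+\delta\|D^2r\|$ with $\|r\|_\infty\lesssim t^{1-2n}$ and $\|D^2r\|\lesssim t$. The optimal choice $\delta=t^{-n}$ gives only $O(t^{1-n})$, not $o(t^{1-n})$, so it cannot isolate the $t^{1-n}\partial_\nu w_\omega^f$ term. The choice $\delta=t^{-n/2}$ gives $t^{1-n/2}$. The fallback also presupposes an unproved $C^2$ bound on $u_{t,\omega}^f$ up to the boundary. Without global $C^2$ control of $w$, the right tool would be localized barriers as in the paper. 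Since Step 1 does hold, none of this affects your main argument.
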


The proof of Theorem~\ref{thm:large-data-asymptotic} is given in Section~\ref{sec:large-data-asymptotics}.  The bulk estimate comes from Proposition~\ref{prop:comparison-barriers}, and the boundary normal estimate comes from Proposition~\ref{prop:boundary-normal-from-C0}.

\section{Proof of the large-data asymptotic}\label{sec:large-data-asymptotics}

This section proves Theorem~\ref{thm:large-data-asymptotic} for each fixed direction \(\omega\).  The proof first records the regularity of the chordwise correction, then uses an exact Schur-complement determinant expansion.  Global comparison barriers give the \(C^0\) asymptotic, and localized boundary barriers upgrade it to the normal derivative asymptotic on compact subsets of the non-glancing boundary.

\subsection{Regularity of the chordwise correction}\label{subsec:regularity-chordwise-correction}

For derivative estimates one must stay away from glancing chords.  For \(0<\kappa<1\), define
\begin{equation}\label{eq:non-glancing-projected-set}
\begin{aligned}
    \Omega_{\omega,\kappa}
    =
    \{y\in\Omega_\omega:
       \omega\cdot\nu(x_+(y))\ge\kappa,
       \ -\omega\cdot\nu(x_-(y))\ge\kappa\},
\end{aligned}
\end{equation}
and set
\begin{equation}\label{eq:non-glancing-tube}
    Q_{\omega,\kappa}
    =
    \{y+s\omega:y\in\Omega_{\omega,\kappa},\ s_-(y)\le s\le s_+(y)\}.
\end{equation}
We also write
\[
    \Gamma_{\omega,\kappa}^\pm=x_\pm(\Omega_{\omega,\kappa})\subset \Gamma_\omega^\pm.
\]
When \(C^m\)-norms are taken over \(\Omega_{\omega,\kappa}\) or \(Q_{\omega,\kappa}\), they mean suprema of the corresponding coordinate derivatives on these sets.  Equivalently, one may take the norms on a slightly larger open non-glancing neighborhood.

\begin{lemma}\label{lem:non-glancing-regularity}
Let \(m\ge0\) and \(0<\kappa<1\).  There is a constant
\(C=C(\Omega,m,\kappa)\), independent of \(\omega\), such that the following hold.

First,
\begin{equation}\label{eq:s-pm-Cm-estimates}
    \|s_+\|_{C^m(\Omega_{\omega,\kappa})}
    +
    \|s_-\|_{C^m(\Omega_{\omega,\kappa})}
    +
    \|\ell_\omega^{-1}\|_{C^m(\Omega_{\omega,\kappa})}
    \le C .
\end{equation}
Second, if \(f\in C^m(\overline\Om)\), then
\begin{equation}\label{eq:w-Cm-nonglancing-estimate}
    \|w_\omega^f\|_{C^m(Q_{\omega,\kappa})}
    \le
    C\|f\|_{C^m(\overline\Om)} .
\end{equation}
Here the \(C^m\)-norm on \(Q_{\omega,\kappa}\) is taken in the coordinates \((y,s)\in\omega^\perp\times\R\).  In particular, for every such non-glancing region,
\begin{equation}\label{eq:w-C2-nonglancing-estimate}
    \|D^2 w_\omega^f\|_{L^\infty(Q_{\omega,\kappa})}
    \le
    C\|f\|_{C^2(\overline\Om)} .
\end{equation}
Moreover, after increasing the constant if necessary,
\begin{equation}\label{eq:normal-derivative-nonglancing-estimate}
    \|\partial_\nu w_\omega^f\|_{C^m(\Gamma_{\omega,\kappa}^+\cup\Gamma_{\omega,\kappa}^-)}
    \le
    C\|f\|_{C^{m+1}(\overline\Om)} .
\end{equation}
\end{lemma}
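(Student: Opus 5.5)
The plan is to make everything follow from uniform implicit-function-theorem estimates for the endpoint functions \(s_\pm\), and then to differentiate the explicit formula \eqref{eq:w-explicit} under the integral sign.

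\textbf{Step 1: endpoint functions.} Fix a smooth defining function \(\rho\) for \(\Omega\) as in the proof of Lemma~\ref{lem:chord-parametrization}. Then \(s_\pm(y)\) is characterized by \(\rho(y+s_\pm(y)\omega)=0\). On \(\Omega_{\omega,\kappa}\) we have
\[
    |\partial_s\rho(y+s_\pm\omega)|=|\nabla\rho|\,|\omega\cdot\nu|\ge\kappa\min_{\bd\Om}|\nabla\rho|>0 .
\]
The implicit function theorem gives \(D_ys_\pm=-D_y\rho/\partial_s\rho\). Repeatedly differentiating this identity expresses \(D^k_ys_\pm\) as a polynomial in derivatives of \(\rho\) up to order \(k\), divided by a power of \(\partial_s\rho\). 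This yields \(C^m\) bounds depending only on \(\|\rho\|_{C^{m}}\), \(\kappa\), and \(\operatorname{diam}\Omega\). All of these are independent of \(\omega\), since rotating coordinates does not change the norms.

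For \(\ell_\omega^{-1}\), the difficulty is a lower bound on the chord length. I would use uniform convexity with \(\rho\) chosen convex with \(D^2\rho\ge c\,\Id\). Expanding \(\rho(y+s\omega)\) about \(x_+\) gives
\[
    0=\rho(x_-)\ge \rho(x_+)-\ell\,\partial_s\rho(x_+)+\tfrac c2\ell^2\cdot0,
\]
but this is the wrong direction. Instead, use boundedness of \(D^2\rho\le C'\): then \(\rho(x_+-\ell'\omega)\le-\ell'\kappa c_0+C'\ell'^2/2<0\) for \(\ell'<2\kappa c_0/C'\). So the point is still inside, hence \(\ell\ge2\kappa c_0/C'\). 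With this lower bound, the derivatives of \(\ell^{-1}\) follow from those of \(s_\pm\) via the quotient rule.

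\textbf{Step 2: interior estimate for \(w\).} Substitute \(r=s_-(y)+\ell_\omega(y)\tau\) with \(\tau\in[0,1]\). This maps the varying interval to a fixed one, so no boundary terms arise when differentiating in \(y\). Each term in \eqref{eq:w-explicit} then becomes an integral over \([0,1]\) of products of \(s_\pm\), \(\ell\), \(\ell^{-1}\), polynomials in \(s\), and \(f(y+(s_-+\ell\tau)\omega)\). Differentiating \(m\) times in \((y,s)\) by the chain and Leibniz rules produces derivatives of \(f\) of order at most \(m\). These are multiplied by products of derivatives of \(s_\pm\) and \(\ell^{\pm1}\), all bounded by Step 1, and by powers of \(s\), which are bounded by \(\operatorname{diam}\Omega\). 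This gives \eqref{eq:w-Cm-nonglancing-estimate} on \(Q_{\omega,\kappa}\), including up to the endpoints.

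The only care needed is the \(C^m\) norm of \(f\). Because of the composition factors, the bound is really \(C\|f\|_{C^m}\) times polynomial factors, but these factors are bounded and do not involve \(f\), so linearity in \(f\) is preserved. The estimate \eqref{eq:w-C2-nonglancing-estimate} is the case \(m=2\).

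\textbf{Step 3: boundary normal derivative.} Write \(\partial_\nu w=\nu\cdot\nabla w\) on \(\Gamma^\pm_{\omega,\kappa}\), where \(\nabla w\) is computed in the \((y,s)\) coordinates. Tangential differentiation along \(\Gamma^\pm_{\omega,\kappa}\) is handled through the diffeomorphism \(y\mapsto x_\pm(y)\) of Lemma~\ref{lem:chord-parametrization}. By Step 1 this diffeomorphism has uniformly bounded \(C^m\) norms, and its inverse \(P_{\omega^\perp}\) is linear. So \(\|\partial_\nu w\|_{C^m}\lesssim\|\nu\|_{C^m(\bd\Om)}\|\nabla w\|_{C^m}\le C\|w\|_{C^{m+1}}\le C\|f\|_{C^{m+1}}\) by Step 2.

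\textbf{Main obstacle.} The delicate point is the uniform-in-\(\omega\) lower bound on \(\ell_\omega\) together with the quantitative implicit function estimates on \(\Omega_{\omega,\kappa}\). One subtlety is that \(\Omega_{\omega,\kappa}\) is closed but derivatives are taken up to its boundary. I would handle this by working on the slightly larger open set \(\Omega_{\omega,\kappa/2}\), where the same bounds hold with \(\kappa/2\) in place of \(\kappa\).
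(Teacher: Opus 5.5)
Your proposal is correct and follows the same route as the paper. Both arguments use uniform implicit-function bounds for $s_\pm$ (with $|\nabla\rho\cdot\omega|\ge\kappa\min_{\bd\Om}|\nabla\rho|$ at the endpoints), a uniform lower bound on $\ell_\omega$, differentiation of \eqref{eq:w-explicit}, and transfer to $\Gamma^\pm_{\omega,\kappa}$ through $y\mapsto x_\pm(y)$.

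The one substantive difference is the chord-length bound. You get it quantitatively, most cleanly as $0=\rho(x_-)\le-\ell\kappa\min_{\bd\Om}|\nabla\rho|+\tfrac12\ell^2\sup_{\overline\Om}|D^2\rho|$, which uses only the exit condition at $x_+$ and gives an explicit constant. The paper instead argues by compactness and contradiction, using both endpoint conditions.

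One small correction: for the term $\int_{s_-}^s$, the rescaling $r=s_-+\ell_\omega\tau$ does not give a fixed interval. Use $r=s_-+(s-s_-)\tau$ there, or simply differentiate with the Leibniz rule.
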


\begin{proof}
We may choose \(\rho\in C^\infty(\mathbb R^n)\) such that
\[
\Omega=\{\rho<0\},
\qquad
\partial\Omega=\{\rho=0\},
\qquad
\nabla\rho\ne0\quad\text{on }\partial\Omega .
\]
For instance, one may take the signed distance function in a collar neighborhood of \(\partial\Omega\), with negative sign in \(\Omega\), and then extend it smoothly away from the boundary while preserving its sign. With this convention, \(\nabla\rho\) points in the outward normal direction on \(\partial\Omega\), and hence
\[
\nabla\rho=|\nabla\rho|\,\nu
\quad\text{on }\partial\Omega .
\]  
The endpoint functions satisfy
\[
    \rho(y+s_\pm(y)\omega)=0.
\]
For a vector \(e\in\omega^\perp\), differentiation gives
\begin{equation}\label{eq:s-pm-first-derivative}
    \partial_e s_\pm(y)
    =
    -
    \frac{\nabla\rho(x_\pm(y))\cdot e}
         {\nabla\rho(x_\pm(y))\cdot\omega}
    =
    -
    \frac{\nabla\rho(x_\pm(y))\cdot e}
         {|\nabla\rho(x_\pm(y))|\,\nu(x_\pm(y))\cdot\omega} .
\end{equation}
On \(\Omega_{\omega,\kappa}\), the denominator is bounded away from zero by a constant depending only on \(\Omega\) and \(\kappa\).  Repeated differentiation of the implicit equation gives the corresponding higher derivative bounds for \(s_\pm\), uniformly in \(\omega\).

It remains to justify that the inverse chord length is also uniformly bounded on these non-glancing sets.  We claim that there is \(c=c(\Omega,\kappa)>0\) such that
\begin{equation}\label{eq:uniform-chord-length-lower-bound}
    \ell_\omega(y)\ge c
    \qquad
    \text{for all }\omega\in\Sn,\ y\in\Omega_{\omega,\kappa}.
\end{equation}
If not, then there exist \(\omega_j\in\Sn\) and \(y_j\in\Omega_{\omega_j,\kappa}\) such that \(\ell_{\omega_j}(y_j)\to0\).  Passing to a subsequence, \(\omega_j\to\omega\), and the two endpoints \(x_{+,j}=x_+^{\omega_j}(y_j)\), \(x_{-,j}=x_-^{\omega_j}(y_j)\) converge to the same point \(x\in\bd\Om\), because \(|x_{+,j}-x_{-,j}|=\ell_{\omega_j}(y_j)\to0\).  The non-glancing inequalities give
\[
    \omega_j\cdot\nu(x_{+,j})\ge\kappa,
    \qquad
    -\omega_j\cdot\nu(x_{-,j})\ge\kappa .
\]
Letting \(j\to\infty\) yields simultaneously
\[
    \omega\cdot\nu(x)\ge\kappa,
    \qquad
    \omega\cdot\nu(x)\le-\kappa,
\]
which is impossible.  This proves \eqref{eq:uniform-chord-length-lower-bound}.  The estimates for \(\ell_\omega^{-1}\) and its derivatives now follow from the bounds for \(s_\pm\), the identity \(\ell_\omega=s_+-s_-\), and the lower bound \eqref{eq:uniform-chord-length-lower-bound}.

The estimate for \(w_\omega^f\) follows by differentiating the explicit formula \eqref{eq:w-explicit}.  Each derivative falls either on \(f(y+r\omega)\), on the endpoints \(s_\pm(y)\), or on \(\ell_\omega(y)^{-1}\).  The preceding bounds control the endpoint terms, while the derivatives of \(f(y+r\omega)\) are bounded by \(\|f\|_{C^m(\overline\Om)}\).  This proves \eqref{eq:w-Cm-nonglancing-estimate}, and \eqref{eq:w-C2-nonglancing-estimate} is the case \(m=2\).

Finally, by Lemma~\ref{lem:chord-parametrization}, the maps \(y\mapsto x_\pm(y)\) parametrize the non-glancing boundary pieces \(\Gamma_\omega^\pm\).  Since \(w_\omega^f(y,s_\pm(y))=0\), the function \(w_\omega^f\) vanishes identically on each of these boundary pieces, so \(\nabla w_\omega^f\) is normal to \(\bd\Omega\) there.  Hence
\[
    \partial_\nu w_\omega^f(x_\pm(y))
    =
    \frac{\partial_s w_\omega^f(y,s_\pm(y))}
         {\omega\cdot\nu(x_\pm(y))} .
\]
The denominator is bounded away from zero on \(\Gamma_{\omega,\kappa}^\pm\).  Differentiating this formula in the boundary coordinates \(y\) gives \eqref{eq:normal-derivative-nonglancing-estimate}.  The right hand side is written with \(\|f\|_{C^{m+1}}\) because the differentiated boundary quantity contains derivatives of \(\partial_s w_\omega^f\), hence derivatives of the chordwise solution one order beyond those used to control \(w_\omega^f\) itself.
\end{proof}

The non-glancing estimates above cannot be extended to glancing by differentiating the individual endpoint functions \(s_\pm\), because \eqref{eq:s-pm-first-derivative} contains the factor \((\omega\cdot\nu)^{-1}\).  This singularity is an artifact of the parametrization by the two separate roots.  The combinations entering the chordwise Dirichlet problem have additional cancellation.  The next lemma records the needed global statement for each fixed direction \(\omega\).

We shall use the following standard elementary fact.  If \(U\) is an open set and
\(H\in C^\infty(U\times(-\epsilon,\epsilon))\) is even in the second variable, then, after possibly shrinking \(\epsilon\), there is a function
\(\widetilde H\), smooth for \(\mu\ge0\) and smoothly extendable to a neighborhood of \(\mu=0\), such that
\[
    H(y,\lambda)=\widetilde H(y,\lambda^2).
\]
Equivalently, a smooth even function of \(\lambda\) is a smooth function of \(\lambda^2\).  %This is the one-dimensional invariant-function theorem for the action \(\lambda\mapsto-\lambda\), and may be proved by Taylor expansion in \(\lambda\), Borel's theorem for the even jets, and a smooth extension from the half-line.

\begin{lemma}\label{lem:global-glancing-smoothness}
Fix \(\omega\in\Sn\).  If \(f\in C^\infty(\overline\Omega)\), then the chordwise solution \(w_\omega^f\), initially defined for \(y\in\Omega_\omega\) and \(s_-(y)<s<s_+(y)\), extends to a function in \(C^\infty(\overline\Omega)\).  The chordwise quadratic barrier
\[
    b_\omega(y,s)=\frac12(s-s_-(y))(s-s_+(y))
\]
extends to a function in \(C^\infty(\overline\Omega)\) as well.  Moreover,
\[
    w_\omega^f=0,
    \qquad
    b_\omega=0
    \quad\text{on }\partial\Omega,
\]
and, in the coordinates \(x=y+s\omega\),
\[
    \partial_s^2w_\omega^f=f,
    \qquad
    \partial_s^2b_\omega=1
\]
throughout \(\Omega\), with the identities extending continuously to \(\overline\Omega\).  For every \(m\), the norms \(\|w_\omega^f\|_{C^m(\overline\Omega)}\) and \(\|b_\omega\|_{C^m(\overline\Omega)}\) are finite.  No uniformity in \(\omega\) is asserted in this global glancing statement.
\end{lemma}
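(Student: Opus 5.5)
The plan is to prove smoothness locally near each point of \(\overline\Omega\) and then patch the local statements together. Since \(w_\omega^f\) and \(b_\omega\) are defined by intrinsic formulas, no partition of unity is needed. Away from the glancing set \(\Gamma_\omega^{\mathrm{gl}}\) there is nothing new to prove. If \(x\in\overline\Omega\) has a neighbourhood in \(\overline\Omega\) meeting only chords with \(y\in\Omega_{\omega,\kappa}\) for some \(\kappa>0\), then Lemma~\ref{lem:non-glancing-regularity} and the smoothness of \(s_\pm\) from Lemma~\ref{lem:chord-parametrization} give smoothness of \(w_\omega^f\) and \(b_\omega\) up to the boundary there. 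Chords through points bounded away from \(\Gamma_\omega^{\mathrm{gl}}\) have length bounded below, so both endpoints are non-glancing. Hence only neighbourhoods of glancing points \(x_0\in\Gamma_\omega^{\mathrm{gl}}\) need attention, and by compactness of \(\overline\Omega\) finitely many local charts will do.

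\emph{Local normal form near glancing.} Fix \(x_0=y_0+s_0\omega\) with \(\omega\cdot\nu(x_0)=0\), and let \(\rho\) be a defining function as in the proof of Lemma~\ref{lem:non-glancing-regularity}. Glancing means \(\partial_s\rho(y_0+s_0\omega)=0\), and uniform convexity gives \(\partial_s^2\rho(y_0+s_0\omega)>0\). By the implicit function theorem there is a smooth function \(a(y)\) near \(y_0\) with \(\partial_s\rho(y+a(y)\omega)=0\). Taylor's formula with integral remainder then gives
\[
    \rho(y+s\omega)=-\mu(y)+(s-a(y))^2h(y,s),
    \qquad
    \mu(y)=-\rho(y+a(y)\omega),
\]
with \(h\) smooth and positive. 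Set \(\lambda=(s-a(y))\sqrt{h(y,s)}\). Since \(\partial_s\lambda>0\) near \((y_0,s_0)\), the map \((y,s)\mapsto(y,\lambda)\) is a local diffeomorphism with smooth inverse \(s=S(y,\lambda)\). The two roots are therefore \(s_\pm(y)=S(y,\pm\sqrt{\mu(y)})\) for \(\mu(y)>0\), which is exactly the condition \(y\in\Omega_\omega\) near \(y_0\), and \(\mu\) is smooth in \(y\). Consequently, for any smooth \(\Phi(y,s_1,s_2)\) symmetric in \((s_1,s_2)\), the function \(\lambda\mapsto\Phi(y,S(y,\lambda),S(y,-\lambda))\) is smooth and even. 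By the elementary fact recalled before the lemma, it equals \(\widetilde\Phi(y,\lambda^2)\) with \(\widetilde\Phi\) smooth. Thus \(\Phi(y,s_+(y),s_-(y))=\widetilde\Phi(y,\mu(y))\) is smooth in \(y\), including up to and across \(\mu=0\).

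\emph{Application to \(b_\omega\) and \(w_\omega^f\).} For the barrier, \(b_\omega=\tfrac12\bigl(s^2-(s_++s_-)s+s_+s_-\bigr)\), and the coefficients are symmetric in \((s_+,s_-)\), so \(b_\omega\) is smooth near \(x_0\). For \(w_\omega^f\), take the particular solution \(F(y,s)=\int_{a(y)}^s(s-r)f(y+r\omega)\,dr\), which is smooth near \((y_0,s_0)\) because \(f\) has a smooth extension past \(\partial\Omega\). Then \(w_\omega^f=F-L\), where \(L\) is the unique affine function of \(s\) interpolating \(F(y,s_\pm(y))\). One can write \(L=A+B\,(s-\tfrac12(s_++s_-))\), where
\[
    B=\int_0^1\partial_sF\bigl(y,s_-+\theta(s_+-s_-)\bigr)\,d\theta,
    \qquad
    A=\tfrac12\bigl(F(y,s_+)+F(y,s_-)\bigr).
\]
Both \(A\) and \(B\) are smooth symmetric functions of \((s_+,s_-)\); for \(B\), the substitution \(\theta\mapsto1-\theta\) shows the symmetry. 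Hence \(w_\omega^f\) is smooth near \(x_0\), and this division-free form removes the apparent \(\ell_\omega^{-1}\) singularity. The remaining conclusions follow directly. Since \(L\) is affine in \(s\), \(\partial_s^2w_\omega^f=\partial_s^2F=f\), and similarly \(\partial_s^2b_\omega=1\); these identities extend to \(\overline\Omega\) by continuity. Both functions vanish at \(s=s_\pm\), hence on \(\Gamma_\omega\), and by continuity on the glancing set, consistent with the earlier \(O(\ell_\omega^2)\) remark. Finiteness of the \(C^m(\overline\Omega)\) norms follows from the finite cover. None of the constants is controlled as \(\omega\) varies, which matches the statement.

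\emph{Main obstacle.} I expect the normal-form step to be the crux: producing the smooth even parametrization \(s_\pm=S(y,\pm\sqrt\mu)\) and checking that the fact about even functions applies with parameters. One has to evenize in \(\lambda\) before substituting \(\lambda^2=\mu\). One also has to verify that \(\overline\Omega\) near \(x_0\) corresponds exactly to \(\{\mu\ge0\}\) with \(s\) between the roots, so that the smooth extension for \(\mu<0\) plays no role. If the Taylor-plus-change-of-variable route gets messy, I would fall back on the Malgrange preparation theorem, writing \(\rho(y+s\omega)=e\cdot(s^2-\sigma_1(y)s+\sigma_2(y))\) with \(e>0\). This would give the elementary symmetric functions \(\sigma_1=s_++s_-\) and \(\sigma_2=s_+s_-\) directly as smooth functions, though the divided-difference coefficient \(B\) would still need the even-function fact.
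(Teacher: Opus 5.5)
Your proof is correct and has the same architecture as the paper's: the implicit-function argument away from glancing, a local normal form at each glancing point in which the chord endpoints are $\pm$ a square root, the even-function factorization, and patching by uniqueness on interior chords. Your average $A$ and divided difference $B$ are, up to sign, the paper's even functions $B(y,\lambda)$ and $A(y,\lambda)$.

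The one real difference is how you get the normal form. You use Taylor expansion about the critical point $a(y)$ and the substitution $\lambda=(s-a)\sqrt{h}$, where the paper uses the Malgrange preparation theorem. This keeps the argument elementary, but it gives the nonlinear root parametrization $s_\pm=S(y,\pm\sqrt{\mu})$. As a result, even $b_\omega$ needs your symmetric-function step, whereas in the paper $b_\omega=\frac12(z^2-\beta(y))$ is immediately smooth.
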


\begin{proof}
Away from the glancing set this is the ordinary implicit-function argument already used in Lemma~\ref{lem:non-glancing-regularity}.  It remains to analyze a glancing point \(x_0=y_0+s_0\omega\in\partial\Omega\), so \(\omega\cdot\nu(x_0)=0\).  Let \(\rho\) be a smooth signed-distance defining function near \(x_0\), negative in \(\Omega\).  At \(x_0\),
\[
    \partial_s\rho(x_0)=\nabla\rho(x_0)\cdot\omega=0.
\]
Since \(\omega\) is tangent to \(\partial\Omega\) at \(x_0\) and \(\Omega\) is uniformly convex, the second fundamental form is positive definite with respect to the outward normal.  For the signed-distance defining function normalized by \(\rho<0\) in \(\Omega\), this gives the sign
\[
    \partial_s^2\rho(x_0)=D^2\rho(x_0)[\omega,\omega]>0 .
\]
By the one-dimensional Malgrange preparation theorem for smooth functions \cite{Malgrange1966}, after shrinking the coordinate neighborhood there are smooth functions \(a(y)\), \(\beta(y)\), and a smooth non-vanishing factor \(h(y,s)\) such that, with \(z=s-a(y)\),
\[
    \rho(y,s)=h(y,s)\bigl(z^2-\beta(y)\bigr).
\]
Since \(2h(y_0,s_0)=\partial_s^2\rho(x_0)>0\), we may shrink the neighborhood so that \(h>0\).  Hence, because \(\rho<0\) in \(\Omega\), the part of \(\Omega\) in this neighborhood is represented by
\[
    z^2<\beta(y).
\]
Thus the two chord endpoints are
\[
    s_\pm(y)=a(y)\pm\sqrt{\beta(y)}
\]
whenever \(\beta(y)>0\).  Consequently
\[
    b_\omega(y,s)
    =\frac12\bigl(z^2-\beta(y)\bigr)
\]
in this neighborhood, and hence \(b_\omega\) is smooth through the glancing point.

We now treat \(w_\omega^f\).  Set
\[
    F(y,z)=f(y+(a(y)+z)\omega).
\]
For fixed \(y\), let
\[
    P(y,z)=\int_0^z (z-r)F(y,r)\,dr,
    \qquad
    \partial_z^2P=F.
\]
For \(\lambda>0\), define
\[
    A(y,\lambda)=\frac{P(y,-\lambda)-P(y,\lambda)}{2\lambda},
    \qquad
    B(y,\lambda)=-\frac{P(y,\lambda)+P(y,-\lambda)}2 .
\]
The functions \(A\) and \(B\) are smooth and even in \(\lambda\) after extending across \(\lambda=0\).  By the even-factorization fact stated above, after possibly shrinking the neighborhood, there are smooth functions \(\widetilde A(y,\mu)\) and \(\widetilde B(y,\mu)\), defined for \(\mu\ge0\) near \(0\) and smoothly extendable across \(\mu=0\), such that
\[
    A(y,\lambda)=\widetilde A(y,\lambda^2),
    \qquad
    B(y,\lambda)=\widetilde B(y,\lambda^2).
\]
Define
\[
    W(y,z)=P(y,z)+\widetilde A(y,\beta(y))z+\widetilde B(y,\beta(y)).
\]
Then \(W\) is smooth in \((y,z)\).  If \(\beta(y)>0\) and \(\lambda=\sqrt{\beta(y)}\), then
\[
    W(y,\pm\lambda)=0,
    \qquad
    \partial_z^2W(y,z)=F(y,z).
\]
By uniqueness for the one-dimensional Dirichlet problem on the chord, \(W(y,s-a(y))\) agrees with \(w_\omega^f(y,s)\) for the interior chords in the neighborhood.  This proves smooth extension through the glancing point.  On overlaps, the local extensions agree with the original chordwise solution on the interior chords, and therefore agree with each other by continuity.  A finite covering of the compact glancing set, together with the non-glancing smoothness, proves the global assertion on \(\overline\Omega\).
\end{proof}

\subsection{The determinant expansion for approximate solutions}\label{subsec:determinant-expansion}

Set
\[
    U_t=t\phi_\omega+t^{1-n}w_\omega^f.
\]
At non-glancing boundary points this satisfies \(U_t=t\phi_\omega\).  At glancing boundary points this equality is understood using the continuous extension of \(w_\omega^f\) described after Lemma~\ref{lem:chordwise-C0-bounds}.  The following lemma gives the precise determinant expansion behind the formal statement \eqref{eq:det-expansion}.

Fix an orthonormal coordinate system \((y,s)\in\omega^\perp\times\R\), where \(s=x\cdot\omega\).  For a function \(w=w(y,s)\), write
\[
    B=D_y^2w,
    \qquad
    z=D_y\partial_s w,
    \qquad
    a=\partial_s^2 w,
    \qquad
    q=t^{-n}.
\]
Here \(B\) is an \((n-1)\times(n-1)\) symmetric matrix, \(z\in\R^{n-1}\), and \(a\in\R\).

\begin{lemma}\label{lem:det-exact-schur}
Let \(w\in C^2\) on an open set, and suppose that \(I+qB\) is invertible at the point under consideration.  Then
\begin{equation}\label{eq:det-exact-schur}
\begin{aligned}
    \det D^2(t\phi_\omega+t^{1-n}w)
    &=
    \det(I+qB)
    \left(
        a-q\left\langle (I+qB)^{-1}z,z\right\rangle
    \right).
\end{aligned}
\end{equation}
In particular, the identity holds for all sufficiently large \(t\) on any set where \(D_y^2w\) is bounded.
\end{lemma}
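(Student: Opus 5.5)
The identity is pure linear algebra, so I will compute the Hessian in the orthonormal coordinates \((y,s)\) and apply the Schur complement formula for a \(2\times2\) block matrix. Since \(\phi_\omega(x)=\frac12|y|^2\), its Hessian in these coordinates is \(\operatorname{diag}(I_{n-1},0)\). Hence
\[
    D^2(t\phi_\omega+t^{1-n}w)
    =
    \begin{pmatrix}
        tI+t^{1-n}B & t^{1-n}z\\
        t^{1-n}z^{T} & t^{1-n}a
    \end{pmatrix}.
\]
The Hessian determinant is invariant under an orthogonal change of coordinates, so this determinant equals the Euclidean one.

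Next I factor \(t\) out of the upper-left block, \(tI+t^{1-n}B=t(I+qB)\) with \(q=t^{-n}\). Because \(I+qB\) is invertible by assumption, the upper-left block is invertible, and the Schur complement formula gives
\[
    \det\begin{pmatrix}M&b\\ b^{T}&c\end{pmatrix}=\det M\,\bigl(c-\langle M^{-1}b,b\rangle\bigr).
\]
I apply it with \(M=t(I+qB)\), \(b=t^{1-n}z\), \(c=t^{1-n}a\).

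The bookkeeping of powers of \(t\) goes as follows:
\begin{itemize}
\item \(\det M=t^{n-1}\det(I+qB)\);
\item \(\langle M^{-1}b,b\rangle=t^{-1}t^{2-2n}\langle (I+qB)^{-1}z,z\rangle=t^{1-n}q\langle (I+qB)^{-1}z,z\rangle\);
\item therefore the Schur complement is \(t^{1-n}\bigl(a-q\langle (I+qB)^{-1}z,z\rangle\bigr)\).
\end{itemize}
Multiplying by \(\det M\), the factor \(t^{n-1}t^{1-n}=1\) cancels, which gives \eqref{eq:det-exact-schur}.

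For the final sentence of the lemma, suppose \(\|B\|\le L\) on the set in question. Then \(\|qB\|\le t^{-n}L<1\) once \(t^n>L\), so \(I+qB\) is invertible by a Neumann series, and the identity holds there.

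There is no real obstacle. The only point needing care is the power counting, specifically the exact cancellation of \(t^{n-1}\) against \(t^{1-n}\). Symmetry of \(D^2w\) ensures the off-diagonal blocks are \(z\) and \(z^{T}\), which justifies the quadratic form \(\langle M^{-1}z,z\rangle\).
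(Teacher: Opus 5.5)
Your proposal is correct and matches the paper's proof: the block form of the Hessian in the \((y,s)\) coordinates, the Schur complement with upper-left block \(t(I+qB)\), and the same power counting in which \(t^{n-1}\) cancels against \(t^{1-n}\). Your Neumann-series justification of the final sentence, that \(I+qB\) is invertible once \(t^n>\sup\|B\|\), is a small detail the paper leaves implicit.
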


\begin{proof}
In the coordinates \(x=y+s\omega\), one has
\begin{equation}\label{eq:hessian-block-form}
    D^2(t\phi_\omega+t^{1-n}w)
    =
    \begin{pmatrix}
        tI_{n-1}+t^{1-n}B
        &
        t^{1-n}z \\
        t^{1-n}z^T
        &
        t^{1-n}a
    \end{pmatrix}.
\end{equation}
The upper-left block is
\[
    A_t=tI_{n-1}+t^{1-n}B=t(I+qB).
\]
Assuming \(A_t\) is invertible, the Schur-complement formula gives
\begin{align*}
    \det D^2(t\phi_\omega+t^{1-n}w)
    &=
    \det A_t
    \left(
        t^{1-n}a
        -
        t^{2-2n}z^T A_t^{-1}z
    \right) \\
    &=
    t^{n-1}\det(I+qB)
    \left(
        t^{1-n}a
        -
        t^{2-2n}t^{-1}z^T(I+qB)^{-1}z
    \right) \\
    &=
    \det(I+qB)
    \left(
        a-q\left\langle (I+qB)^{-1}z,z\right\rangle
    \right),
\end{align*}
because \(q=t^{-n}\).  This proves the formula.
\end{proof}

\begin{lemma}\label{lem:det-first-order}
Let \(M\ge1\).  There exists a constant \(C=C(n,M)\) such that the following holds.  Suppose
\[
    |a|+|z|+\|B\|\le M,
    \qquad
    t^{-n}\|B\|\le \frac12.
\]
Then
\begin{equation}\label{eq:det-first-order}
\begin{aligned}
    \det D^2(t\phi_\omega+t^{1-n}w)
    &=
    a
    +
    t^{-n}\left(a\tr B-|z|^2\right)
    +
    R_t,
\end{aligned}
\end{equation}
where
\begin{equation}\label{eq:det-remainder-bound}
    |R_t|\le C t^{-2n}.
\end{equation}
Equivalently,
\begin{equation}\label{eq:det-first-order-w}
\begin{aligned}
    \det D^2(t\phi_\omega+t^{1-n}w)
    &=
    \partial_s^2 w \\
    &\quad
    +
    t^{-n}\left[
        (\partial_s^2w)\tr(D_y^2w)
        -
        |D_y\partial_s w|^2
    \right]
    +O(t^{-2n}).
\end{aligned}
\end{equation}
The constant in the \(O(t^{-2n})\) term depends only on \(n\) and on the stated bound for the second derivatives of \(w\).
\end{lemma}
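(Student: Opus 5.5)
The plan is to start from the exact Schur-complement identity of Lemma~\ref{lem:det-exact-schur} and expand its two factors in powers of \(q=t^{-n}\), tracking the remainders by bounds that depend only on \(n\) and \(M\). The hypothesis \(q\|B\|\le\frac12\) guarantees that \(I+qB\) is invertible. It also gives \(\|(I+qB)^{-1}\|\le 2\), so the lemma applies at every point under consideration.

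\textbf{Step 1 (the determinant factor).} The eigenvalues \(\lambda_i\) of the symmetric matrix \(B\) satisfy \(|\lambda_i|\le M\), hence
\[
\det(I+qB)=\prod_{i=1}^{n-1}(1+q\lambda_i)=1+q\tr B+E_1,
\]
where \(E_1\) is a sum of elementary symmetric polynomials of degree \(\ge2\) in the \(q\lambda_i\). Therefore \(|E_1|\le C(n,M)q^2\). It is also clear that \(|\det(I+qB)|\le (1+qM)^{n-1}\le C(n,M)\), using \(q\le 1/(2M)\) when \(\|B\|\) is comparable to \(M\). When \(\|B\|\) is small the bound is trivial, since \(q\le1\) for \(t\ge1\).

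\textbf{Step 2 (the Schur factor).} Write \((I+qB)^{-1}=I-qB(I+qB)^{-1}\). This gives
\[
q\langle (I+qB)^{-1}z,z\rangle=q|z|^2+E_2,\qquad |E_2|\le q^2\|B\|\,\|(I+qB)^{-1}\|\,|z|^2\le 2M^3q^2 .
\]
So the second factor equals \(a-q|z|^2-E_2\).

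\textbf{Step 3 (multiply out).} The product is
\[
(1+q\tr B+E_1)(a-q|z|^2-E_2)=a+q\,(a\tr B-|z|^2)+R_t.
\]
Here \(R_t\) collects the terms \(-q^2\tr B\,|z|^2\), \(E_1\cdot(\text{second factor})\), and \(-(1+q\tr B)E_2\). Each of these is bounded by \(C(n,M)q^2\), because \(|a|,|z|,\|B\|\le M\), \(|\tr B|\le (n-1)M\), and \(q\le1\). This proves \eqref{eq:det-first-order} and \eqref{eq:det-remainder-bound}. Substituting \(a=\partial_s^2w\), \(B=D_y^2w\), and \(z=D_y\partial_sw\) then gives \eqref{eq:det-first-order-w}.

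The only point needing care is that the constant be uniform, that is, independent of \(t\) and of the point. This is exactly what the operator-norm bound \(\|(I+qB)^{-1}\|\le2\) from the Neumann series provides; no real obstacle is expected.
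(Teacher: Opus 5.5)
Your proposal is correct and follows essentially the paper's argument. You start from the Schur-complement identity of Lemma~\ref{lem:det-exact-schur}, expand $\det(I+qB)=1+q\tr B+O(q^2)$ and $(I+qB)^{-1}=I+O(q)$, and multiply out; your eigenvalue and resolvent-identity bounds make the paper's $O(\cdot)$ constants explicit. One small point: you invoke $q\le1$ (i.e.\ $t\ge1$), which is not a stated hypothesis but is harmless in context, since the paper's multiplication step tacitly needs bounded $q$ too. It can also be removed entirely by using $q\|B\|\le\frac12$ throughout, e.g.\ $|\det(I+qB)|\le(1+q\|B\|)^{n-1}\le(3/2)^{n-1}$.
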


\begin{proof}
By Lemma~\ref{lem:det-exact-schur},
\[
    \det D^2(t\phi_\omega+t^{1-n}w)
    =
    \det(I+qB)
    \left(a-q\left\langle (I+qB)^{-1}z,z\right\rangle\right).
\]
For \(q\|B\|\le1/2\), the elementary matrix expansions give
\[
    \det(I+qB)=1+q\tr B+O(q^2),
    \qquad
    (I+qB)^{-1}=I+O(q),
\]
with constants depending only on \(n\) and \(M\).  Hence
\[
    \left\langle (I+qB)^{-1}z,z\right\rangle
    =
    |z|^2+O(q).
\]
Substituting these estimates into the exact formula gives
\[
\begin{aligned}
    \det D^2(t\phi_\omega+t^{1-n}w)
    &=
    (1+q\tr B+O(q^2))
    (a-q|z|^2+O(q^2)) \\
    &=
    a+q(a\tr B-|z|^2)+O(q^2).
\end{aligned}
\]
Since \(q=t^{-n}\), this is \eqref{eq:det-first-order}--\eqref{eq:det-first-order-w}.
\end{proof}

\begin{corollary}\label{cor:approx-source-equation}
Let \(w=w_\omega^f\) satisfy
\[
    \partial_s^2w=f
\]
on a region where the second derivatives of \(w\) are bounded.  Then the approximate solution
\[
    U_t=t\phi_\omega+t^{1-n}w_\omega^f
\]
satisfies
\begin{equation}\label{eq:approx-source}
\begin{aligned}
    \det D^2U_t
    &=
    f
    +
    t^{-n}\left[
        f\tr(D_y^2w_\omega^f)
        -
        |D_y\partial_s w_\omega^f|^2
    \right]
    +O(t^{-2n}).
\end{aligned}
\end{equation}
In particular,
\begin{equation}\label{eq:approx-source-short}
    \det D^2U_t=f+O(t^{-n})
\end{equation}
locally uniformly on every region where \(\|D^2w_\omega^f\|\) is bounded.
\end{corollary}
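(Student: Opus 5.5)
This is a direct consequence of Lemma~\ref{lem:det-first-order}, applied pointwise with \(w=w_\omega^f\). On the region under consideration, write \(B=D_y^2w_\omega^f\), \(z=D_y\partial_sw_\omega^f\), and \(a=\partial_s^2w_\omega^f\). The hypothesis that the second derivatives of \(w_\omega^f\) are bounded gives a constant \(M\ge1\) with \(|a|+|z|+\|B\|\le M\) at every point of the region. Such regions exist: non-glancing tubes \(Q_{\omega,\kappa}\) by \eqref{eq:w-C2-nonglancing-estimate}, and for smooth \(f\) and fixed \(\omega\) all of \(\overline\Omega\) by Lemma~\ref{lem:global-glancing-smoothness}.

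The proof then proceeds in three steps.

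\begin{enumerate}[label=(\roman*)]
\item Choose \(T\) so that \(T^{-n}M\le\frac12\). For \(t\ge T\) the smallness condition \(t^{-n}\|B\|\le\frac12\) of Lemma~\ref{lem:det-first-order} then holds uniformly on the region. Lemma~\ref{lem:det-exact-schur} applies since \(I+qB\) is invertible.
\item Invoke \eqref{eq:det-first-order-w}. Its remainder constant depends only on \(n\) and \(M\), so it is uniform over the region.
\item Substitute the chordwise equation \(\partial_s^2w_\omega^f=f\) from \eqref{eq:w-correction}, both in the leading term \(a\) and in the factor multiplying \(\tr(D_y^2w_\omega^f)\). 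This gives exactly \eqref{eq:approx-source}.
\end{enumerate}

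For \eqref{eq:approx-source-short}, bound the bracketed term. We have
\[
\bigl|f\,\tr(D_y^2w_\omega^f)-|D_y\partial_sw_\omega^f|^2\bigr|\le (n-1)\|f\|_{L^\infty}M+M^2,
\]
so \(t^{-n}\) times the bracket is \(O(t^{-n})\). The \(O(t^{-2n})\) remainder is also \(O(t^{-n})\). Both constants depend only on \(n\), \(M\) and \(\|f\|_{L^\infty}\), which gives the locally uniform statement.

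There is no real obstacle here. The only point needing care is that all constants depend solely on the second-derivative bound \(M\) and not on the particular point. This holds because Lemma~\ref{lem:det-first-order} is a purely algebraic estimate with constants depending on \((n,M)\). The restriction to regions with bounded \(D^2w_\omega^f\) is what keeps the statement valid near glancing chords, where the uniform estimates of Lemma~\ref{lem:non-glancing-regularity} fail.
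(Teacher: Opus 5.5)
Your proposal is correct and matches the paper's proof: apply Lemma~\ref{lem:det-first-order} pointwise with \(w=w_\omega^f\), then substitute \(\partial_s^2w_\omega^f=f\). The extra steps you add (choosing \(T\) with \(T^{-n}M\le\frac12\), and bounding the bracketed term to get the \(O(t^{-n})\) statement) only spell out the uniformity that the paper's one-line proof leaves implicit.
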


\begin{proof}
Apply Lemma~\ref{lem:det-first-order} with \(w=w_\omega^f\) and use \(\partial_s^2w_\omega^f=f\).
\end{proof}

The scaling in \eqref{eq:det-expansion} is as follows.  The leading transverse block has determinant \(t^{n-1}\), while the longitudinal second derivative of the correction has size \(t^{1-n}\partial_s^2w\).  Their product is order one.  The first relative perturbation of the transverse block is
\[
    t^{-1}t^{1-n}D_y^2w=t^{-n}D_y^2w,
\]
which is why the first error term is of order \(t^{-n}\).  No term of order \(t^{1-n}\) appears in the determinant expansion.  The off-diagonal blocks contribute only through the Schur complement: before multiplication by \(\det A_t\), their contribution is \(t^{2-2n}z^TA_t^{-1}z=O(t^{1-2n})\), and after multiplication by \(\det A_t=O(t^{n-1})\) this is again of order \(t^{-n}\).

\subsection{Convexity of the approximate solution}\label{subsec:convexity-approximate-solutions}

We next record the convexity estimate for the approximate profile.  This is a local statement in a coordinate tube on which the second derivatives of the chordwise correction are bounded.  By Lemma~\ref{lem:non-glancing-regularity}, this applies uniformly on every non-glancing tube \(Q_{\omega,\kappa}\).

The estimate is slightly more general than needed for \(U_t\), because the comparison argument will later use perturbations of size \(t^{1-2n}\).  Let \(w,\eta\in C^2(Q)\), where \(Q\) is a region written in coordinates \((y,s)\in\omega^\perp\times\R\).  For \(|\lambda|\le L\), set
\begin{equation}\label{eq:perturbed-approx-profile}
    V_{t,\lambda}
    =
    t\phi_\omega+t^{1-n}w+\lambda t^{1-2n}\eta
    =
    t\phi_\omega+t^{1-n}\bigl(w+\lambda t^{-n}\eta\bigr).
\end{equation}

\begin{lemma}\label{lem:schur-convexity-criterion}
Let \(c_0>0\), \(M<\infty\), and \(L<\infty\).  Suppose that on \(Q\)
\begin{equation}\label{eq:convexity-hypotheses-general}
    \partial_s^2 w\ge c_0,
    \qquad
    \|D^2w\|+\|D^2\eta\|\le M .
\end{equation}
Then there is \(T=T(n,c_0,M,L)\) such that, for every \(t\ge T\) and every \(|\lambda|\le L\),
\[
    D^2V_{t,\lambda}>0
    \quad\text{on }Q.
\]
More precisely, writing
\[
    W_{t,\lambda}=w+\lambda t^{-n}\eta,
\]
one has
\begin{equation}\label{eq:convexity-transverse-block}
    I+t^{-n}D_y^2W_{t,\lambda}
    \ge \frac12 I
\end{equation}
and
\begin{equation}\label{eq:convexity-schur-lower}
    \partial_s^2W_{t,\lambda}
    -t^{-n}
    \left\langle
        (I+t^{-n}D_y^2W_{t,\lambda})^{-1}D_y\partial_sW_{t,\lambda},
        D_y\partial_sW_{t,\lambda}
    \right\rangle
    \ge \frac{c_0}{2} .
\end{equation}
\end{lemma}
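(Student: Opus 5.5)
The plan is to apply the block structure \eqref{eq:hessian-block-form} from Lemma~\ref{lem:det-exact-schur} with \(w\) replaced by \(W_{t,\lambda}=w+\lambda t^{-n}\eta\). This is legitimate because \(V_{t,\lambda}=t\phi_\omega+t^{1-n}W_{t,\lambda}\) by \eqref{eq:perturbed-approx-profile}. We then use the standard fact that a symmetric block matrix
\[
\begin{pmatrix} A & b\\ b^T & d\end{pmatrix}
\]
is positive definite iff \(A>0\) and the Schur complement \(d-b^TA^{-1}b>0\). As in the proof of Lemma~\ref{lem:det-exact-schur}, \(A=t(I+qB_W)\) with \(q=t^{-n}\), \(B_W=D_y^2W_{t,\lambda}\), and \(z_W=D_y\partial_sW_{t,\lambda}\). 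The Schur complement equals
\[
t^{1-n}\bigl(\partial_s^2W_{t,\lambda}-q\langle (I+qB_W)^{-1}z_W,z_W\rangle\bigr).
\]
So positivity of \(D^2V_{t,\lambda}\) reduces exactly to \eqref{eq:convexity-transverse-block} and \eqref{eq:convexity-schur-lower}; the latter gives strict positivity since \(c_0/2>0\).

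First we bound the second derivatives of \(W_{t,\lambda}\) uniformly. For \(t\ge1\) and \(|\lambda|\le L\), we have \(\|D^2W_{t,\lambda}\|\le \|D^2w\|+L\|D^2\eta\|\le (1+L)M\). In particular \(\|B_W\|\), \(|z_W|\) and \(|\partial_s^2 W_{t,\lambda}|\) are all at most \((1+L)M\).

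To get \eqref{eq:convexity-transverse-block}, choose \(T\) so that \(T^{-n}(1+L)M\le 1/2\). Then \(I+qB_W\ge (1-q\|B_W\|)I\ge \tfrac12 I\), and consequently \(\|(I+qB_W)^{-1}\|\le 2\).

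For \eqref{eq:convexity-schur-lower}, we first bound the longitudinal term from below:
\[
\partial_s^2W_{t,\lambda}\ge \partial_s^2 w - Lt^{-n}\|D^2\eta\|\ge c_0-LMt^{-n}.
\]
Next we bound the correction term from above:
\[
q\langle (I+qB_W)^{-1}z_W,z_W\rangle\le 2t^{-n}(1+L)^2M^2.
\]
Hence the left side of \eqref{eq:convexity-schur-lower} is at least
\[
c_0-t^{-n}\bigl(LM+2(1+L)^2M^2\bigr),
\]
which is \(\ge c_0/2\) once \(T^{n}\ge 2(LM+2(1+L)^2M^2)/c_0\). Taking \(T\) to be the maximum of the two thresholds and \(1\), we get \(T=T(n,c_0,M,L)\), as claimed.

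There is no serious obstacle. The one point needing care is to check that the Schur-complement criterion is applied to the correctly scaled blocks, i.e.\ that the factor \(t^{1-n}>0\) and the factor \(t\) in \(A\) do not affect signs. A minor second point is that the hypotheses are pointwise on \(Q\), so the resulting bounds hold uniformly over \(Q\).
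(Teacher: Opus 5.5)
Your proposal is correct and follows the paper's proof essentially step for step. The steps match exactly: the bound \(\|D^2W_{t,\lambda}\|\le (1+L)M\) for \(t\ge1\), the threshold \(t^{-n}(1+L)M\le\frac12\) giving \eqref{eq:convexity-transverse-block} and \(\|(I+t^{-n}D_y^2W_{t,\lambda})^{-1}\|\le2\), and the Schur-complement lower bound \(c_0-LMt^{-n}-2t^{-n}(1+L)^2M^2\). Your explicit factoring of the Schur complement as \(t^{1-n}(\cdots)\) just spells out a scaling the paper leaves implicit.
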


\begin{proof}
Put \(q=t^{-n}\).  The Hessian of \(V_{t,\lambda}=t\phi_\omega+t^{1-n}W_{t,\lambda}\) has block form
\begin{equation}\label{eq:hessian-block-form-convexity}
    D^2V_{t,\lambda}
    =
    \begin{pmatrix}
        t(I+qD_y^2W_{t,\lambda})
        &
        t^{1-n}D_y\partial_sW_{t,\lambda} \\
        t^{1-n}(D_y\partial_sW_{t,\lambda})^T
        &
        t^{1-n}\partial_s^2W_{t,\lambda}
    \end{pmatrix}.
\end{equation}
The assumptions imply, for \(t\ge1\) and \(|\lambda|\le L\),
\[
    \|D^2W_{t,\lambda}\|
    \le
    M+LM
    =:M_1 .
\]
If \(qM_1\le1/2\), then \eqref{eq:convexity-transverse-block} holds and the transverse block is positive definite.

It remains to check the Schur complement.  Since
\[
    \partial_s^2W_{t,\lambda}
    =
    \partial_s^2w+\lambda q\partial_s^2\eta,
\]
we have
\[
    \partial_s^2W_{t,\lambda}
    \ge
    c_0-LqM .
\]
Also, when \(qM_1\le1/2\),
\[
    \|(I+qD_y^2W_{t,\lambda})^{-1}\|\le2,
    \qquad
    |D_y\partial_sW_{t,\lambda}|\le M_1 .
\]
Therefore
\[
\begin{aligned}
    &\partial_s^2W_{t,\lambda}
    -q
    \left\langle
        (I+qD_y^2W_{t,\lambda})^{-1}D_y\partial_sW_{t,\lambda},
        D_y\partial_sW_{t,\lambda}
    \right\rangle \\
    &\qquad\ge
    c_0-LqM-2qM_1^2 .
\end{aligned}
\]
For \(t\) large enough, the right-hand side is at least \(c_0/2\).  The positivity of the transverse block and of the Schur complement implies \(D^2V_{t,\lambda}>0\).
\end{proof}

\begin{corollary}\label{cor:model-profile-convexity}
Let \(f\in C^\infty(\overline\Omega)\) satisfy \(f\ge c_f>0\), and fix \(\omega\in\Sn\).  Let \(\eta\in C^2(\overline\Omega)\).  For every fixed \(L<\infty\), there is \(T=T(f,\omega,\eta,L)\) such that
\[
    t\phi_\omega+t^{1-n}w_\omega^f+\lambda t^{1-2n}\eta
\]
is strictly convex in \(\Omega\) whenever \(t\ge T\) and \(|\lambda|\le L\).  In particular, for every fixed \(C<\infty\), the functions
\[
    U_t=t\phi_\omega+t^{1-n}w_\omega^f,
    \qquad
    U_t\pm C t^{1-2n}b_\omega
\]
are strictly convex in \(\Omega\) for all sufficiently large \(t\).
\end{corollary}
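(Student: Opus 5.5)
The plan is to apply Lemma~\ref{lem:schur-convexity-criterion} with \(Q=\Omega\) itself, using the global smoothness from Lemma~\ref{lem:global-glancing-smoothness}. That lemma is what makes this possible: since \(\omega\) is fixed, \(w_\omega^f\in C^\infty(\overline\Omega)\) and \(\|D^2w_\omega^f\|_{L^\infty(\Omega)}<\infty\). This holds including near the glancing set, where the non-glancing estimates of Lemma~\ref{lem:non-glancing-regularity} alone would not suffice.

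Set \(w=w_\omega^f\) and take the given \(\eta\in C^2(\overline\Omega)\). The hypotheses \eqref{eq:convexity-hypotheses-general} are then checked one at a time.

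\emph{Longitudinal convexity.} Lemma~\ref{lem:global-glancing-smoothness} gives \(\partial_s^2 w_\omega^f=f\) throughout \(\Omega\). Hence \(\partial_s^2 w\ge c_f=:c_0>0\).

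\emph{Bounded Hessians.} Choose
\[
M=\max\bigl(1,\ \|D^2w_\omega^f\|_{L^\infty(\Omega)}+\|D^2\eta\|_{L^\infty(\Omega)}\bigr),
\]
which is finite by the \(C^m(\overline\Omega)\) bounds of Lemma~\ref{lem:global-glancing-smoothness} and the assumption \(\eta\in C^2(\overline\Omega)\). Note that the \((y,s)\)-coordinate Hessian is just the Euclidean Hessian expressed in a rotated orthonormal frame, so its norm is the same.

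With these choices, Lemma~\ref{lem:schur-convexity-criterion} yields \(T=T(n,c_f,M,L)\), which depends on \(f,\omega,\eta,L\) through \(M\). For \(t\ge T\) and \(|\lambda|\le L\) it gives \(D^2V_{t,\lambda}>0\) pointwise on \(\Omega\). A \(C^2\) function with positive definite Hessian on the convex set \(\Omega\) is strictly convex: restricting it to any segment gives a function with positive second derivative. This proves the first assertion.

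For the particular cases, \(U_t\) is the choice \(\lambda=0\). For \(U_t\pm Ct^{1-2n}b_\omega\), take \(\eta=b_\omega\), which lies in \(C^\infty(\overline\Omega)\) by Lemma~\ref{lem:global-glancing-smoothness}, and \(\lambda=\pm C\) with \(L=C\).

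The only genuine obstacle is the uniform second-derivative bound up to the glancing set. On chords that are nearly tangent, the endpoint functions \(s_\pm\) have singular derivatives, so one cannot argue chord by chord. This obstacle is entirely handled by the Malgrange-preparation argument of Lemma~\ref{lem:global-glancing-smoothness}, and that is why the resulting threshold \(T\) is not uniform in \(\omega\).
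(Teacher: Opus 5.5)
Your proposal is correct and follows essentially the same route as the paper. Lemma~\ref{lem:global-glancing-smoothness} gives bounded second derivatives of \(w_\omega^f\) and \(b_\omega\) on \(\overline\Omega\) for the fixed \(\omega\), the identity \(\partial_s^2w_\omega^f=f\ge c_f\) gives the longitudinal lower bound, and Lemma~\ref{lem:schur-convexity-criterion} with \(Q=\Omega\) finishes; the particular cases follow from \(\lambda=0\) and from \(\eta=b_\omega\), \(\lambda=\pm C\). Your extra remarks on the rotation invariance of the Hessian norm and on passing from \(D^2V_{t,\lambda}>0\) to strict convexity on the convex set \(\Omega\) are correct details that the paper leaves implicit.
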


\begin{proof}
By Lemma~\ref{lem:global-glancing-smoothness}, the functions \(w_\omega^f\) and \(b_\omega\) have bounded second derivatives on \(\overline\Omega\) for the fixed direction \(\omega\).  The chordwise equation gives
\[
    \partial_s^2w_\omega^f=f\ge c_f.
\]
The assertion follows directly from Lemma~\ref{lem:schur-convexity-criterion}, with \(Q=\Omega\) written in the coordinates \(x=y+s\omega\).  For the final statement, take \(\eta=b_\omega\).
\end{proof}

\subsection{Global comparison barriers and the \texorpdfstring{\(C^0\)}{C0} asymptotic}\label{subsec:barrier-construction}

We now compare the true solution \(u_{t,\omega}^f\) with
\[
    U_t=t\phi_\omega+t^{1-n}w_\omega^f .
\]
The global smoothness statement in Lemma~\ref{lem:global-glancing-smoothness} allows the chordwise barriers to be used on all of \(\Omega\), including near glancing.

Define
\begin{equation}\label{eq:barrier-b-explicit}
    b_\omega(y,s)
    =
    \frac12\bigl(s-s_-(y)\bigr)\bigl(s-s_+(y)\bigr).
\end{equation}
Then
\begin{equation}\label{eq:barrier-b}
    \partial_s^2 b_\omega=1,
    \qquad
    b_\omega=0\quad\text{on }\partial\Omega,
\end{equation}
and
\begin{equation}\label{eq:b-negative}
    b_\omega\le0
    \quad\text{on each chord},
    \qquad
    |b_\omega(y,s)|\le \frac18 \ell_\omega(y)^2
    \le \frac18\operatorname{diam}(\Omega)^2 .
\end{equation}

We will use Theorem~\ref{thm:MA-comparison} in the following classical form.  If \(D\subset\R^n\) is bounded and \(v,u\in C^2(\overline D)\) are convex, with
\[
    \det D^2v\ge \det D^2u\quad\text{in }D,
    \qquad
    v\le u\quad\text{on }\partial D,
\]
then \(v\le u\) in \(D\).  Thus, for equal boundary values, the convex function with the larger Monge--Ampere determinant lies lower.

\begin{proposition}\label{prop:comparison-barriers}
Let \(f\in C^\infty(\overline\Omega)\) satisfy \(f\ge c_f>0\), and fix \(\omega\in\Sn\).  There exist constants \(C_*>0\) and \(T<\infty\), depending on \(f\), \(\omega\), and \(\Omega\), such that, for every \(t\ge T\), the functions
\begin{equation}\label{eq:lower-upper-barriers}
    V_t^{\rm low}
    =U_t+C_*t^{1-2n}b_\omega,
    \qquad
    V_t^{\rm up}
    =U_t-C_*t^{1-2n}b_\omega
\end{equation}
are strictly convex in \(\Omega\), agree with \(t\phi_\omega\) on \(\partial\Omega\), and satisfy
\begin{equation}\label{eq:barrier-order-around-U}
    V_t^{\rm low}\le U_t\le V_t^{\rm up}
    \quad\text{in }\Omega,
\end{equation}
\begin{equation}\label{eq:barrier-det-inequalities}
    \det D^2V_t^{\rm low}\ge f,
    \qquad
    \det D^2V_t^{\rm up}\le f
    \quad\text{in }\Omega .
\end{equation}
Consequently,
\begin{equation}\label{eq:global-C0-asymptotic}
    U_t+C_*t^{1-2n}b_\omega
    \le
    u_{t,\omega}^f
    \le
    U_t-C_*t^{1-2n}b_\omega
    \quad\text{in }\Omega,
\end{equation}
and hence
\begin{equation}\label{eq:global-C0-asymptotic-bound}
    \|u_{t,\omega}^f-U_t\|_{L^\infty(\Omega)}
    \le
    \frac{C_*}{8}\operatorname{diam}(\Omega)^2 t^{1-2n}.
\end{equation}
\end{proposition}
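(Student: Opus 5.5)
The plan is to apply Lemma~\ref{lem:det-first-order} to the perturbed correction \(W_\pm=w_\omega^f\pm C_*t^{-n}b_\omega\), so that \(V_t^{\rm low}\) and \(V_t^{\rm up}\) are both of the form \(t\phi_\omega+t^{1-n}W_\pm\), and then close with Theorem~\ref{thm:MA-comparison}. By Lemma~\ref{lem:global-glancing-smoothness}, for the fixed \(\omega\) both \(w_\omega^f\) and \(b_\omega\) lie in \(C^\infty(\overline\Omega)\), vanish on \(\partial\Omega\), and satisfy \(\partial_s^2w_\omega^f=f\) and \(\partial_s^2b_\omega=1\). Consequently \(V_t^{\rm low}=V_t^{\rm up}=t\phi_\omega\) on \(\partial\Omega\). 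Corollary~\ref{cor:model-profile-convexity} with \(\eta=b_\omega\) and \(|\lambda|\le L\) gives strict convexity for \(t\ge T(L)\), and we fix \(L\ge C_*\) once \(C_*\) has been chosen. The ordering \eqref{eq:barrier-order-around-U} follows immediately from \(b_\omega\le0\) in \eqref{eq:b-negative}.

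The key step is the determinant inequality. Set \(M\) to be a bound for \(\|D^2w_\omega^f\|+\|D^2b_\omega\|\) on \(\overline\Omega\); this is finite by Lemma~\ref{lem:global-glancing-smoothness}. For \(t\ge1\) and \(|\lambda|\le L\), the second derivatives of \(W=w_\omega^f+\lambda t^{-n}b_\omega\) are bounded by \(M(1+L)\). Lemma~\ref{lem:det-first-order} then gives
\[
\det D^2\bigl(t\phi_\omega+t^{1-n}W\bigr)=\partial_s^2W+t^{-n}E(W)+O(t^{-2n}),
\]
where \(E(W)=(\partial_s^2W)\tr D_y^2W-|D_y\partial_sW|^2\) satisfies \(|E(W)|\le E_0\), with \(E_0\) depending on \(n\), \(M\) and \(L\). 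Since \(\partial_s^2W=f+\lambda t^{-n}\), taking \(\lambda=C_*\) gives
\[
\det D^2V_t^{\rm low}\ge f+t^{-n}(C_*-E_0)-C't^{-2n}.
\]
Taking \(\lambda=-C_*\) gives the reverse bound \(\det D^2V_t^{\rm up}\le f-t^{-n}(C_*-E_0)+C't^{-2n}\).

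There is a circularity here: \(E_0\) and \(C'\) depend on \(L\), and we need \(L\ge C_*\). It is harmless, because \(\lambda\) enters \(E(W)\) only through terms multiplied by \(t^{-n}\). Concretely, \(E(W)=E(w_\omega^f)+O(Lt^{-n})\), and the remainder in Lemma~\ref{lem:det-first-order} is \(O_L(t^{-2n})\). So we first choose \(C_*=\sup_{\overline\Omega}|E(w_\omega^f)|+1\), which depends only on \(f\), \(\omega\) and \(\Omega\). We then set \(L=C_*\) and take \(t\) large so that all the \(L\)-dependent error terms are below \(\tfrac12 t^{-n}\). This yields \eqref{eq:barrier-det-inequalities}. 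The main obstacle is this bookkeeping, together with the requirement that the bounds hold uniformly up to the glancing set. The latter is exactly what Lemma~\ref{lem:global-glancing-smoothness} provides, since the non-glancing estimates of Lemma~\ref{lem:non-glancing-regularity} alone would not suffice.

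Finally, \(V_t^{\rm low}\), \(V_t^{\rm up}\) and \(u_{t,\omega}^f\) are all convex, belong to \(C^2(\overline\Omega)\), and share the boundary values \(t\phi_\omega\). We apply the classical form of Theorem~\ref{thm:MA-comparison} stated before the proposition. From \(\det D^2V_t^{\rm low}\ge f=\det D^2u_{t,\omega}^f\) we get \(V_t^{\rm low}\le u_{t,\omega}^f\), and from \(\det D^2u_{t,\omega}^f\ge\det D^2V_t^{\rm up}\) we get \(u_{t,\omega}^f\le V_t^{\rm up}\). This is \eqref{eq:global-C0-asymptotic}. The bound \eqref{eq:global-C0-asymptotic-bound} then follows from \(|b_\omega|\le\frac18\operatorname{diam}(\Omega)^2\).
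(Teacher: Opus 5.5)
Your proposal is correct and follows essentially the same route as the paper: you write both barriers as \(t\phi_\omega+t^{1-n}(w_\omega^f\pm C_*t^{-n}b_\omega)\), expand with Lemma~\ref{lem:det-first-order} using the global bounds from Lemma~\ref{lem:global-glancing-smoothness}, pick \(C_*\) above \(\sup|f\,\tr D_y^2w_\omega^f-|D_y\partial_sw_\omega^f|^2|\) before enlarging \(T\) to absorb the \(C_*\)-dependent \(O(t^{-2n})\) remainder, and finish with the classical comparison principle. Your explicit resolution of the order in which \(C_*\) and \(L\) are chosen is a more careful version of the paper's choice \(C_*\ge 2A+2\) followed by increasing \(T\), and your choice \(C_*=A+1\) also works.
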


\begin{proof}
The boundary equality follows from \(w_\omega^f=b_\omega=0\) on \(\partial\Omega\).  The order \eqref{eq:barrier-order-around-U} follows from \(b_\omega\le0\).  Strict convexity for large \(t\) follows from Corollary~\ref{cor:model-profile-convexity} with \(\eta=b_\omega\) and \(|\lambda|=C_*\).

It remains to prove the determinant inequalities.  Put \(q=t^{-n}\) and write
\[
    E_\omega^f
    =
    f\,\tr(D_y^2w_\omega^f)
    -|D_y\partial_s w_\omega^f|^2 .
\]
By Lemma~\ref{lem:global-glancing-smoothness},
\begin{equation}\label{eq:E-bound-global}
    A:=\|E_\omega^f\|_{L^\infty(\Omega)}<\infty .
\end{equation}
For \(\sigma\in\{+1,-1\}\), define
\[
    W_t^\sigma=w_\omega^f+\sigma C_*q b_\omega,
    \qquad
    V_t^\sigma=t\phi_\omega+t^{1-n}W_t^\sigma .
\]
Thus \(V_t^+=V_t^{\rm low}\) and \(V_t^-=V_t^{\rm up}\).  Since
\[
    \partial_s^2W_t^\sigma=f+\sigma C_*q,
\]
and since \(D^2w_\omega^f\) and \(D^2b_\omega\) are bounded on \(\Omega\), Lemma~\ref{lem:det-first-order} gives, uniformly in \(\Omega\),
\begin{equation}\label{eq:det-barrier-expansion}
    \det D^2V_t^\sigma
    =
    f
    +q\left(\sigma C_*+E_\omega^f\right)
    +O_{C_*}(q^2).
\end{equation}
Choose \(C_*\ge 2A+2\).  After increasing \(T\), the remainder in \eqref{eq:det-barrier-expansion} is bounded in absolute value by \(q\) for \(t\ge T\).  For \(\sigma=+1\),
\[
    \det D^2V_t^{\rm low}
    \ge
    f+q(C_*-A)-q
    \ge f.
\]
For \(\sigma=-1\),
\[
    \det D^2V_t^{\rm up}
    \le
    f-q(C_*-A)+q
    \le f.
\]
This proves \eqref{eq:barrier-det-inequalities}.

On \(\partial\Omega\), both barriers equal \(u_{t,\omega}^f=t\phi_\omega\).  Since
\[
    \det D^2V_t^{\rm low}\ge f=\det D^2u_{t,\omega}^f,
\]
the comparison principle in the orientation recalled above gives
\[
    V_t^{\rm low}\le u_{t,\omega}^f
    \quad\text{in }\Omega.
\]
Similarly,
\[
    \det D^2u_{t,\omega}^f=f\ge \det D^2V_t^{\rm up},
\]
and the same comparison principle applied to the pair \((u_{t,\omega}^f,V_t^{\rm up})\) gives
\[
    u_{t,\omega}^f\le V_t^{\rm up}
    \quad\text{in }\Omega.
\]
This proves \eqref{eq:global-C0-asymptotic}.  The norm bound \eqref{eq:global-C0-asymptotic-bound} follows from \eqref{eq:b-negative}.
\end{proof}

\subsection{Boundary normal derivatives}\label{subsec:boundary-normal-derivatives}

We now derive the boundary statement \eqref{eq:DN-asymptotic} on compact subsets of the non-glancing boundary.  The global \(C^0\) estimate from Proposition~\ref{prop:comparison-barriers} controls the artificial boundary of small one-sided collars.  Inside those collars we use barriers with a larger but still lower-order profile perturbation.

For reference, throughout this part of the argument \(U_t=t\phi_\omega+t^{1-n}w_\omega^f\), \(q=t^{-n}\), and \(b_\omega\) denotes the chordwise quadratic barrier from \eqref{eq:barrier-b-explicit}.  The auxiliary functions \(G_{x_0}\) and collars \(D_{x_0}^\pm\) are introduced below at the boundary point \(x_0\) where the normal derivative is estimated.

Choose
\[
    \varepsilon_t=t^{-n/2},
    \qquad
    \eta_t=t^{1-n}\varepsilon_t=t^{1-n-n/2}.
\]
Then \(\varepsilon_t\to0\), while \(t^n\varepsilon_t\to\infty\).  Thus the determinant correction generated by the local barriers dominates the \(O(t^{-n})\) determinant defect of \(U_t\), and their normal derivative error is \(\eta_t=o(t^{1-n})\).

For a boundary point \(x_0\in\Gamma_\omega\), set
\begin{equation}\label{eq:p-x0-def}
    y_0=P_{\omega^\perp}x_0,
    \qquad
    p_{x_0}(x)=|P_{\omega^\perp}x-y_0|^2.
\end{equation}
Thus \(p_{x_0}\) is independent of the \(s\)-variable and
\begin{equation}\label{eq:p-x0-normal-zero}
    p_{x_0}(x_0)=0,
    \qquad
    \partial_\nu p_{x_0}(x_0)=0.
\end{equation}
Define
\begin{equation}\label{eq:G-x0-def}
    G_{x_0}= -b_\omega+p_{x_0}.
\end{equation}
Since \(b_\omega\le0\), the function \(G_{x_0}\) is nonnegative, and
\begin{equation}\label{eq:G-ss}
    \partial_s^2G_{x_0}=-1.
\end{equation}

We shall also use the sign of the outward normal derivative of \(b_\omega\) on non-glancing boundary pieces.  For \(y\in\Omega_\omega\),
\begin{equation}\label{eq:b-normal-step8}
    \partial_\nu b_\omega(x_\pm(y))
    =
    \frac{\ell_\omega(y)}{2|\omega\cdot\nu(x_\pm(y))|}
    >0.
\end{equation}
Indeed, \(b_\omega\) vanishes on each non-glancing boundary piece, so \(\nabla b_\omega\) is normal there.  Moreover,
\[
    \partial_s b_\omega(y,s_+(y))=\frac12\ell_\omega(y),
    \qquad
    \partial_s b_\omega(y,s_-(y))=-\frac12\ell_\omega(y),
\]
and the endpoint signs \(\omega\cdot\nu(x_+)>0\), \(\omega\cdot\nu(x_-)<0\) give \eqref{eq:b-normal-step8}.

\begin{proposition}\label{prop:boundary-normal-from-C0}
Let \(f\in C^\infty(\overline\Omega)\) satisfy \(f\ge c_f>0\).  Fix \(\omega\in\Sn\), and let \(K\Subset\Gamma_\omega\).  Then there are constants \(C_K<\infty\) and \(T_K<\infty\) such that, for all \(t\ge T_K\),
\begin{equation}\label{eq:boundary-normal-rate-from-C0}
    \sup_{x\in K}
    \left|
        \partial_\nu u_{t,\omega}^f(x)
        -\partial_\nu U_t(x)
    \right|
    \le
    C_K t^{1-n-n/2}.
\end{equation}
In particular,
\begin{equation}\label{eq:boundary-normal-asymptotic-proved}
    \partial_\nu u_{t,\omega}^f
    =
    t\partial_\nu\phi_\omega
    +
    t^{1-n}\partial_\nu w_\omega^f
    +
    o(t^{1-n})
\end{equation}
locally uniformly on \(\Gamma_\omega\).
\end{proposition}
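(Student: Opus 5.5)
We will prove \eqref{eq:boundary-normal-rate-from-C0} with local one-sided barriers in a thin collar at each \(x_0\in K\), built from \(U_t\) and the auxiliary function \(G_{x_0}\) of \eqref{eq:G-x0-def}. Since \(K\) is compact in \(\Gamma_\omega\), we may fix \(\kappa>0\) with \(K\subset\Gamma_{\omega,\kappa}^+\cup\Gamma_{\omega,\kappa}^-\). All constants below will then depend only on \(\Omega,f,\omega,\kappa\), through Lemma~\ref{lem:non-glancing-regularity} and Lemma~\ref{lem:global-glancing-smoothness}. Fix \(x_0\in K\), say \(x_0=x_+(y_0)\); the case \(x_-\) is symmetric. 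We use the collar
\[
    D_{x_0}=\{x\in\Omega:|P_{\omega^\perp}x-y_0|<r_0,\ s>s_+(y)-h_0\},
\]
with \(r_0,h_0>0\) fixed small, independent of \(x_0\in K\) and of \(t\). Its boundary consists of a piece of \(\bd\Om\), on which every candidate barrier equals \(t\phi_\omega\) or lies on the correct side, and an artificial part \(\Sigma\), on which \(p_{x_0}\ge r_0^2\) or \(-b_\omega\ge c(h_0)>0\). In either case \(G_{x_0}\ge c_1>0\) on \(\Sigma\).

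The barriers are
\[
    V^\pm=U_t\mp\varepsilon_t t^{1-n}G_{x_0},\qquad \varepsilon_t=t^{-n/2}.
\]
We will check the following properties in order.

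\emph{(i) Convexity.} \(V^\pm=t\phi_\omega+t^{1-n}(w_\omega^f\mp\varepsilon_tG_{x_0})\), and \(\|D^2G_{x_0}\|\) is bounded. Moreover \(\partial_s^2(w\mp\varepsilon_tG)=f\pm\varepsilon_t\ge c_f/2\). So Lemma~\ref{lem:schur-convexity-criterion}, applied with \(\eta\) replaced by \(G\) and a harmless modification since \(\varepsilon_t\gg t^{-n}\), gives strict convexity for large \(t\).

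\emph{(ii) Determinant.} Lemma~\ref{lem:det-first-order} applied with \(W=w\mp\varepsilon_tG\) yields
\[
    \det D^2V^\pm=f\pm\varepsilon_t+O(t^{-n}).
\]
Since \(t^n\varepsilon_t\to\infty\), this gives \(\det D^2V^+\ge f\) and \(\det D^2V^-\le f\) for large \(t\), uniformly over \(x_0\in K\).

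\emph{(iii) Boundary ordering.} On \(\bd\Om\cap\overline{D_{x_0}}\) we have \(b_\omega=0\), so \(G=p_{x_0}\ge0\). This gives \(V^+\le t\phi_\omega=u\le V^-\) there. On \(\Sigma\), Proposition~\ref{prop:comparison-barriers} gives \(|u-U_t|\le Ct^{1-2n}\), while \(\varepsilon_tt^{1-n}G\ge c_1t^{1-n-n/2}\gg t^{1-2n}\). Hence \(V^+\le u\le V^-\) on \(\Sigma\) as well.

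Theorem~\ref{thm:MA-comparison} then gives \(V^+\le u\le V^-\) in \(D_{x_0}\). All three functions agree at \(x_0\), because \(b_\omega(x_0)=p_{x_0}(x_0)=0\). So the outward normal derivatives at \(x_0\) are ordered:
\[
    \partial_\nu V^-(x_0)\le\partial_\nu u(x_0)\le\partial_\nu V^+(x_0).
\]
By \eqref{eq:p-x0-normal-zero} and \eqref{eq:b-normal-step8},
\[
    \partial_\nu G_{x_0}(x_0)=-\partial_\nu b_\omega(x_0)=-\frac{\ell_\omega(y_0)}{2|\omega\cdot\nu(x_0)|},
\]
and this is bounded by \(\operatorname{diam}\Omega/(2\kappa)\). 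Therefore
\[
    |\partial_\nu u(x_0)-\partial_\nu U_t(x_0)|\le C\varepsilon_tt^{1-n}=Ct^{1-n-n/2}.
\]
Adding \(\partial_\nu U_t=t\partial_\nu\phi_\omega+t^{1-n}\partial_\nu w_\omega^f\) gives \eqref{eq:boundary-normal-asymptotic-proved}.

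The main obstacle is step (iii): we need a uniform positive lower bound \(c_1\) for \(G_{x_0}\) on \(\Sigma\), uniform in \(x_0\in K\). This requires \(-b_\omega\) to be bounded below on the inner face \(s=s_+(y)-h_0\). There \(-b_\omega=\tfrac12h_0(\ell_\omega(y)-h_0)\), which is positive once \(h_0<\tfrac12\inf\ell_\omega\) on a neighborhood of \(P_{\omega^\perp}K\); that infimum is positive by \eqref{eq:uniform-chord-length-lower-bound}. We must also choose \(r_0\) so that the lateral part of the collar stays in a non-glancing region, which is possible by compactness. The comparison principle is then applied on \(D_{x_0}\), a bounded open set with continuous barriers, so Theorem~\ref{thm:MA-comparison} applies directly.
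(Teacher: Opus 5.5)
Your proposal is correct and follows essentially the same route as the paper. It uses the same one-sided collars at $x_0$, the same barriers $U_t\mp t^{1-n-n/2}G_{x_0}$ with $\varepsilon_t=t^{-n/2}$, the global $C^0$ bound of Proposition~\ref{prop:comparison-barriers} on the artificial boundary, and the normal-derivative squeeze at $x_0$ via $\partial_\nu p_{x_0}(x_0)=0$ and \eqref{eq:b-normal-step8}. As in the paper, the convexity step is a direct rerun of the Schur-complement argument for $w_\omega^f\mp\varepsilon_tG_{x_0}$, whose second derivatives are bounded uniformly in $t$ and $x_0$; it is not a literal application of Lemma~\ref{lem:schur-convexity-criterion}, because $\varepsilon_t t^{n}$ is unbounded.
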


\begin{proof}
We prove the estimate pointwise with constants uniform for \(x_0\in K\).  Write
\[
    K_+=K\cap\Gamma_\omega^+,
    \qquad
    K_-=K\cap\Gamma_\omega^- .
\]
Since \(K\Subset\Gamma_\omega\), there are \(\kappa>0\) and an open set \(Y\Subset\Omega_\omega\) such that, with
\[
    Y_+=P_{\omega^\perp}(K_+),
    \qquad
    Y_-=P_{\omega^\perp}(K_-),
\]
one has \(Y_+\cup Y_-\subset Y\) and
\[
    \omega\cdot\nu(x_+(y))\ge\kappa,
    \qquad
    -\omega\cdot\nu(x_-(y))\ge\kappa
    \qquad \text{for all }y\in Y .
\]
Lemma~\ref{lem:non-glancing-regularity} gives uniform bounds for the endpoint functions and for \(\partial_\nu b_\omega\) on this non-glancing set.  Since \(\overline Y\subset\Omega_\omega\) and \(\ell_\omega\) is continuous and positive on \(\Omega_\omega\), there is \(\ell_0>0\) such that
\[
    \ell_\omega(y)=s_+(y)-s_-(y)\ge \ell_0
    \qquad \text{for all }y\in Y .
\]
Choose \(\rho>0\) so small that \(\{y\in\omega^\perp:|y-y_0|<\rho\}\subset Y\) for every \(y_0\in Y_+\cup Y_-\), and then choose \(0<\delta<\ell_0/4\).  Reducing \(\rho\) and \(\delta\), if necessary, we may also assume that the following collars remain inside this non-glancing tube.  If \(x_0\in K_+\), set \(y_0=P_{\omega^\perp}x_0\) and
\begin{equation}\label{eq:plus-boundary-collar}
    D_{x_0}^+
    =
    \{y+s\omega: |y-y_0|<\rho,\ s_+(y)-\delta<s<s_+(y)\}.
\end{equation}
If \(x_0\in K_-\), set
\begin{equation}\label{eq:minus-boundary-collar}
    D_{x_0}^-
    =
    \{y+s\omega: |y-y_0|<\rho,\ s_-(y)<s<s_-(y)+\delta\}.
\end{equation}
We write \(D_{x_0}\) for either collar.  The Monge--Ampere comparison principle for convex solutions is valid on bounded domains, so the corners of these collars cause no difficulty; one may also justify the same step by applying the comparison principle on a smooth exhaustion and passing to the limit.

Let \(\varepsilon_t\) and \(\eta_t\) be as above.  Define
\begin{equation}\label{eq:boundary-local-barriers}
    L_{t,x_0}=U_t-\eta_tG_{x_0},
    \qquad
    W_{t,x_0}=U_t+\eta_tG_{x_0}.
\end{equation}
Since \(G_{x_0}\ge0\), these barriers are ordered around \(U_t\).  On the physical boundary part of \(D_{x_0}\), one has \(w_\omega^f=0\) and \(b_\omega=0\), so
\[
    L_{t,x_0}=t\phi_\omega-\eta_t p_{x_0}
    \le
    t\phi_\omega
    =u_{t,\omega}^f
    \le
    t\phi_\omega+\eta_t p_{x_0}
    =W_{t,x_0}.
\]
At the distinguished point \(x_0\), both inequalities are equalities.

On the artificial boundary of \(D_{x_0}\), the function \(G_{x_0}\) is bounded below by a positive constant independent of \(x_0\).  On the tangential face \(|y-y_0|=\rho\), one has \(G_{x_0}\ge p_{x_0}=\rho^2\).  On the inner face, for either sign,
\[
    -b_\omega=\frac12\delta(\ell_\omega(y)-\delta)
    \ge \frac12\delta(\ell_0-\delta).
\]
Thus there exists \(c_0>0\) such that
\begin{equation}\label{eq:G-positive-artificial-boundary}
    G_{x_0}\ge c_0
    \quad\text{on }\partial D_{x_0}\setminus\partial\Omega,
\end{equation}
for every \(x_0\in K\).

By Proposition~\ref{prop:comparison-barriers}, there is \(M<\infty\) such that
\begin{equation}\label{eq:global-C0-input-for-boundary}
    \|u_{t,\omega}^f-U_t\|_{L^\infty(\Omega)}\le M t^{1-2n}
\end{equation}
for all sufficiently large \(t\).  Since
\[
    \frac{\eta_t}{t^{1-2n}}=t^{n/2}\to\infty,
\]
\eqref{eq:G-positive-artificial-boundary} and \eqref{eq:global-C0-input-for-boundary} imply, for all sufficiently large \(t\),
\[
    L_{t,x_0}\le u_{t,\omega}^f\le W_{t,x_0}
    \quad\text{on }\partial D_{x_0}.
\]

We next check the determinant inequalities.  The profiles in \eqref{eq:boundary-local-barriers} are
\[
    L_{t,x_0}
    =
    t\phi_\omega+t^{1-n}(w_\omega^f-\varepsilon_tG_{x_0}),
    \qquad
    W_{t,x_0}
    =
    t\phi_\omega+t^{1-n}(w_\omega^f+\varepsilon_tG_{x_0}).
\]
On the chosen collars, \(G_{x_0}\) has uniformly bounded second derivatives and \(\partial_s^2G_{x_0}=-1\).  Lemma~\ref{lem:det-first-order} gives, uniformly for \(x\in D_{x_0}\) and \(x_0\in K\),
\begin{equation}\label{eq:boundary-barrier-det-expansion}
\begin{aligned}
    \det D^2L_{t,x_0}&=f+\varepsilon_t+O(t^{-n}),\\
    \det D^2W_{t,x_0}&=f-\varepsilon_t+O(t^{-n}).
\end{aligned}
\end{equation}
Since \(\varepsilon_t=t^{-n/2}\) and \(t^{-n}=o(\varepsilon_t)\), after increasing the lower threshold for \(t\),
\begin{equation}\label{eq:boundary-barrier-det-inequalities}
    \det D^2L_{t,x_0}\ge f,
    \qquad
    \det D^2W_{t,x_0}\le f
    \quad\text{in }D_{x_0}.
\end{equation}
The same uniform \(C^2\)-bounds and the Schur-complement convexity argument of Lemma~\ref{lem:schur-convexity-criterion} show that both \(L_{t,x_0}\) and \(W_{t,x_0}\) are strictly convex for all sufficiently large \(t\).  Indeed, their longitudinal profile second derivatives are \(f+\varepsilon_t\) and \(f-\varepsilon_t\), both bounded below by \(c_f/2\) for large \(t\), while the transverse block remains a small relative perturbation of \(tI_{n-1}\).

The comparison principle, with the orientation recalled above, now gives
\begin{equation}\label{eq:boundary-barrier-comparison}
    L_{t,x_0}\le u_{t,\omega}^f\le W_{t,x_0}
    \quad\text{in }D_{x_0}.
\end{equation}
Indeed, the lower barrier has determinant at least \(f=\det D^2u_{t,\omega}^f\), so it lies below \(u_{t,\omega}^f\); the upper barrier has determinant at most \(f\), so \(u_{t,\omega}^f\) lies below it.

At \(x_0\), the three functions in \eqref{eq:boundary-barrier-comparison} have the same boundary value.  Since \(u_{t,\omega}^f-L_{t,x_0}\ge0\) and \(W_{t,x_0}-u_{t,\omega}^f\ge0\) in the one-sided collar, their outward normal derivatives at \(x_0\) are nonpositive.  Hence
\begin{equation}\label{eq:normal-derivative-squeeze}
    \partial_\nu W_{t,x_0}(x_0)
    \le
    \partial_\nu u_{t,\omega}^f(x_0)
    \le
    \partial_\nu L_{t,x_0}(x_0).
\end{equation}
Using \eqref{eq:p-x0-normal-zero} and \(G_{x_0}=-b_\omega+p_{x_0}\), we get
\[
    \partial_\nu G_{x_0}(x_0)=-\partial_\nu b_\omega(x_0).
\]
Therefore
\[
    \partial_\nu L_{t,x_0}(x_0)
    =
    \partial_\nu U_t(x_0)
    +\eta_t\partial_\nu b_\omega(x_0),
    \qquad
    \partial_\nu W_{t,x_0}(x_0)
    =
    \partial_\nu U_t(x_0)
    -\eta_t\partial_\nu b_\omega(x_0).
\]
By \eqref{eq:b-normal-step8}, \(\partial_\nu b_\omega(x_0)>0\), and the non-glancing estimates give
\[
    \sup_K |\partial_\nu b_\omega|\le C_K .
\]
Thus \eqref{eq:normal-derivative-squeeze} implies
\[
    \left|
        \partial_\nu u_{t,\omega}^f(x_0)
        -\partial_\nu U_t(x_0)
    \right|
    \le C_K\eta_t
    =C_Kt^{1-n-n/2},
\]
uniformly for \(x_0\in K\).  Finally,
\[
    \partial_\nu U_t
    =
    t\partial_\nu\phi_\omega
    +
    t^{1-n}\partial_\nu w_\omega^f,
\]
and \(t^{1-n-n/2}=o(t^{1-n})\).  This proves the proposition.
\end{proof}

\begin{proof}[Proof of Theorem~\ref{thm:large-data-asymptotic}]
The estimate \eqref{eq:bulk-asymptotic} follows from \eqref{eq:global-C0-asymptotic-bound} in Proposition~\ref{prop:comparison-barriers}, after absorbing the fixed factor \(\operatorname{diam}(\Omega)^2/8\) into the constant.  The estimate \eqref{eq:DN-asymptotic} follows from \eqref{eq:boundary-normal-rate-from-C0} in Proposition~\ref{prop:boundary-normal-from-C0}, together with
\[
    \partial_\nu U_t
    =t\partial_\nu\phi_\omega+t^{1-n}\partial_\nu w_\omega^f .
\]
\end{proof}

The comparison argument uses global barriers for each fixed direction \(\omega\).  The only non-glancing restriction left in the boundary derivative statement is intrinsic to the recovery formula: at glancing points the factor \(\omega\cdot\nu\) vanishes, so the endpoint identity will not be used there.  The estimates above are proved for each fixed \(\omega\); the constants are not claimed to be uniform as \(\omega\) varies.

\section{From boundary asymptotics to X-ray data}\label{sec:recovery-line-integrals}

We now show how the boundary asymptotic proved in Theorem~\ref{thm:large-data-asymptotic} determines chord integrals of the source.  This is the point at which the Dirichlet-to-Neumann map enters the recovery of the source.

\subsection{Boundary limits determined by the Dirichlet-to-Neumann map}\label{subsec:boundary-limits}

For \(f\in\mathcal F\), \(\omega\in\Sn\), and \(x\in\Gamma_\omega\), define
\begin{equation}\label{eq:L-omega}
    L_\omega^f(x)
    =
    \lim_{t\to\infty}
    t^{n-1}
    \left[
        \DN_f(t\phi_\omega|_{\bd\Om})(x)
        -
        t\partial_\nu\phi_\omega(x)
    \right].
\end{equation}
The limit exists locally uniformly on \(\Gamma_\omega\) by Theorem~\ref{thm:large-data-asymptotic}, and
\begin{equation}\label{eq:L-omega-normal}
    L_\omega^f(x)=\partial_\nu w_\omega^f(x),
    \qquad x\in\Gamma_\omega .
\end{equation}
Thus \(L_\omega^f\) is determined by the Dirichlet-to-Neumann map on the large cylindrical family.

\begin{lemma}\label{lem:DN-equality-L}
Let \(f_1,f_2\in\mathcal F\).  Suppose that, for some \(T_0>0\),
\[
    \DN_{f_1}(t\phi_\omega|_{\bd\Om})
    =
    \DN_{f_2}(t\phi_\omega|_{\bd\Om})
\]
for all \(t\ge T_0\) and all \(\omega\in\Sn\).  Then
\[
    L_\omega^{f_1}=L_\omega^{f_2}
    \quad\text{on }\Gamma_\omega
\]
for every \(\omega\in\Sn\).
\end{lemma}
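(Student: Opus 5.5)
The plan is to read the statement directly off the definition \eqref{eq:L-omega} of the boundary limits. Fix \(\omega\in\Sn\) and a point \(x\in\Gamma_\omega\). The first step is to choose a compact neighbourhood \(K\Subset\Gamma_\omega\) of \(x\); for instance, a small closed geodesic ball in \(\bd\Om\) around \(x\), which stays away from the glancing set because \(\Gamma_\omega\) is open in \(\bd\Om\). Theorem~\ref{thm:large-data-asymptotic} then applies to each source \(f_j\), \(j=1,2\), separately. It gives constants \(C_K^{(j)}\) and \(T_K^{(j)}\) such that, for \(t\ge T_K^{(j)}\),
\[
    \sup_{K}\Bigl|t^{n-1}\bigl[\DN_{f_j}(t\phi_\omega|_{\bd\Om})-t\partial_\nu\phi_\omega\bigr]-\partial_\nu w_\omega^{f_j}\Bigr|
    \le C_K^{(j)}t^{-n/2}.
\]
Consequently the limit defining \(L_\omega^{f_j}(x)\) exists, and it equals \(\partial_\nu w_\omega^{f_j}(x)\), as recorded in \eqref{eq:L-omega-normal}.

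The second step uses the hypothesis. For \(t\ge\max(T_0,T_K^{(1)},T_K^{(2)})\), the two bracketed expressions
\[
    t^{n-1}\bigl[\DN_{f_j}(t\phi_\omega|_{\bd\Om})(x)-t\partial_\nu\phi_\omega(x)\bigr],\qquad j=1,2,
\]
coincide, because the Dirichlet-to-Neumann data agree and the subtracted term \(t\partial_\nu\phi_\omega\) does not depend on the source. The two sequences are therefore identical for all large \(t\), and so their limits agree:
\[
    L_\omega^{f_1}(x)=L_\omega^{f_2}(x).
\]
Since \(x\in\Gamma_\omega\) and \(\omega\in\Sn\) were arbitrary, the lemma follows.

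There is no real obstacle here. The only point that needs care is the existence of the limit. The constants in Theorem~\ref{thm:large-data-asymptotic} depend on \(\omega\) and on \(K\), and they are not uniform up to the glancing set. This is why the argument works with one fixed \(\omega\) and one compact neighbourhood of a non-glancing point at a time, and why the conclusion is stated on \(\Gamma_\omega\) only. The limit in \eqref{eq:L-omega} is a pointwise limit in \(t\), so no uniformity in \(\omega\) is needed.
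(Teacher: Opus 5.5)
Your proposal is correct and follows essentially the same argument as the paper: for fixed \(\omega\) and \(x\in\Gamma_\omega\), the two normalized expressions coincide for all large \(t\), so their limits in \eqref{eq:L-omega} agree. Your extra step, invoking Theorem~\ref{thm:large-data-asymptotic} on a compact neighbourhood \(K\) to justify that the limit exists, is the same justification the paper gives immediately after \eqref{eq:L-omega}.
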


\begin{proof}
Fix \(\omega\in\Sn\) and \(x\in\Gamma_\omega\).  For all sufficiently large \(t\), the equality of the Dirichlet-to-Neumann data gives
\[
    t^{n-1}
    \left[
        \DN_{f_1}(t\phi_\omega|_{\bd\Om})(x)
        -t\partial_\nu\phi_\omega(x)
    \right]
    =
    t^{n-1}
    \left[
        \DN_{f_2}(t\phi_\omega|_{\bd\Om})(x)
        -t\partial_\nu\phi_\omega(x)
    \right].
\]
Passing to the limit \(t\to\infty\) and using \eqref{eq:L-omega} gives the claim.
\end{proof}

\subsection{Endpoint derivatives and chord integrals}\label{subsec:endpoint-derivatives-chord-integrals}

\begin{definition}[Chordwise X-ray transform]\label{def:xray}
For \(f\in C(\overline\Om)\), define
\begin{equation}\label{eq:xray}
    \Xray_\omega f(y)
    =
    \int_{s_-(y)}^{s_+(y)}
        f(y+s\omega)\,ds,
    \qquad y\in\Omega_\omega.
\end{equation}
\end{definition}

\begin{lemma}\label{lem:endpoint-identity}
Let \(w=w_\omega^f\) be defined by \eqref{eq:w-correction}.  Then, for every \(y\in\Omega_\omega\),
\begin{equation}\label{eq:line-integral-endpoint-derivatives}
    \Xray_\omega f(y)
    =
    \partial_s w(y,s_+(y))
    -
    \partial_s w(y,s_-(y)).
\end{equation}
Moreover, since \(w=0\) on the boundary endpoints of the chords,
\begin{equation}\label{eq:line-integral-normal}
    \Xray_\omega f(y)
    =
    (\omega\cdot\nu(x_+(y)))\,\partial_\nu w(x_+(y))
    -
    (\omega\cdot\nu(x_-(y)))\,\partial_\nu w(x_-(y)).
\end{equation}
\end{lemma}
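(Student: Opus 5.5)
The first identity \eqref{eq:line-integral-endpoint-derivatives} follows from the fundamental theorem of calculus applied along a single chord. Fix \(y\in\Omega_\omega\). The map \(s\mapsto w(y,s)\) is smooth on the open interval \((s_-(y),s_+(y))\) and satisfies \(\partial_s^2w=f(y+s\omega)\) there. We therefore integrate \(\partial_s^2 w\) from \(s_-(y)+\epsilon\) to \(s_+(y)-\epsilon\) and let \(\epsilon\to0\). The one-sided endpoint values of \(\partial_s w\) exist and are finite, for either of two reasons. One is the explicit derivative formula from the proof of Lemma~\ref{lem:chordwise-C0-bounds}:
\[
    \partial_s w(y,s)=\int_{s_-}^s f\,dr-\frac1{\ell_\omega}\int_{s_-}^{s_+}(s_+-r)f\,dr,
\]
which is continuous up to both endpoints. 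The other is the global smoothness of Lemma~\ref{lem:global-glancing-smoothness}. Either way we get
\[
    \partial_s w(y,s_+)-\partial_s w(y,s_-)=\int_{s_-}^{s_+}f(y+r\omega)\,dr=\Xray_\omega f(y).
\]
As a sanity check, inserting the explicit formula directly gives the same result. At \(s_+\) it gives \(\int_{s_-}^{s_+}f-\ell^{-1}\int(s_+-r)f\), and at \(s_-\) it gives \(-\ell^{-1}\int(s_+-r)f\). Subtracting the second from the first leaves exactly \(\int f\).

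For the second identity \eqref{eq:line-integral-normal}, I convert \(\partial_s\) into \(\partial_\nu\) at non-glancing endpoints. This repeats the argument already used at the end of the proof of Lemma~\ref{lem:non-glancing-regularity}. By Lemma~\ref{lem:chord-parametrization}, \(x_\pm(y)\in\Gamma^\pm_\omega\), and the non-glancing pieces \(\Gamma_\omega^\pm\) are parametrized smoothly by \(y\mapsto x_\pm(y)\). The function \(w\) vanishes identically on these pieces and is \(C^1\) up to them, by Lemma~\ref{lem:non-glancing-regularity} or Lemma~\ref{lem:global-glancing-smoothness}. Hence the tangential derivatives of \(w\) vanish there, and \(\nabla w(x_\pm)=\partial_\nu w(x_\pm)\,\nu(x_\pm)\). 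Taking the inner product with \(\omega\) gives
\[
    \partial_s w(y,s_\pm(y))=\omega\cdot\nabla w(x_\pm(y))=(\omega\cdot\nu(x_\pm(y)))\,\partial_\nu w(x_\pm(y)).
\]
Substituting this into the first identity yields \eqref{eq:line-integral-normal}.

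The only point needing care is regularity of \(w\) up to the boundary. We need it so that the one-sided derivative \(\partial_s w\) at the endpoint coincides with \(\omega\cdot\nabla w\) computed from a \(C^1\) extension, and so that the tangential derivatives vanish. Both are covered by Lemma~\ref{lem:global-glancing-smoothness}, which gives \(w\in C^\infty(\overline\Omega)\) with \(w=0\) on \(\partial\Omega\). A local alternative near each \(x_\pm(y)\) is the non-glancing implicit-function smoothness. So I expect no real obstacle.
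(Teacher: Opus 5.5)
Your proposal is correct and follows essentially the same route as the paper: you integrate $\partial_s^2 w=f$ along the chord to obtain the endpoint difference of $\partial_s w$. You then use that $w$ vanishes on the non-glancing pieces $\Gamma_\omega^\pm$, so $\nabla w(x_\pm)$ is normal and $\partial_s w(x_\pm)=(\omega\cdot\nu(x_\pm))\,\partial_\nu w(x_\pm)$. Your added justification of regularity up to the chord endpoints, via the explicit formula for $\partial_s w$ or Lemma~\ref{lem:global-glancing-smoothness}, makes explicit what the paper's proof uses tacitly.
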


\begin{proof}
Integrating \(\partial_s^2w=f\) along the chord gives
\[
    \int_{s_-(y)}^{s_+(y)} f(y+s\omega)\,ds
    =
    \partial_s w(y,s_+(y))-
    \partial_s w(y,s_-(y)),
\]
which is \eqref{eq:line-integral-endpoint-derivatives}.  By Lemma~\ref{lem:chord-parametrization}, the endpoints \(x_\pm(y)\) belong to the non-glancing boundary pieces \(\Gamma_\omega^\pm\), and the maps \(y\mapsto x_\pm(y)\) locally parametrize those pieces.  Since \(w(y,s_\pm(y))=0\), the function \(w\) vanishes on these non-glancing boundary pieces.  Thus the tangential derivative of \(w\) along \(\bd\Om\) vanishes at \(x_\pm(y)\).  Therefore \(\nabla w(x_\pm(y))\) is parallel to \(\nu(x_\pm(y))\), and
\[
    \partial_s w(x_\pm(y))
    =
    \omega\cdot\nabla w(x_\pm(y))
    =
    (\omega\cdot\nu(x_\pm(y)))\partial_\nu w(x_\pm(y)).
\]
Substituting these identities into \eqref{eq:line-integral-endpoint-derivatives} proves \eqref{eq:line-integral-normal}.
\end{proof}

\begin{proposition}\label{prop:recover-xray}
Let \(f\in\mathcal F\).  Then the Dirichlet-to-Neumann map on the large cylindrical family determines \(\Xray_\omega f\) for every \(\omega\in\Sn\).  More precisely, for every \(y\in\Omega_\omega\),
\begin{equation}\label{eq:recover-xray}
    \Xray_\omega f(y)
    =
    (\omega\cdot\nu(x_+(y)))\,L_\omega^f(x_+(y))
    -
    (\omega\cdot\nu(x_-(y)))\,L_\omega^f(x_-(y)).
\end{equation}
\end{proposition}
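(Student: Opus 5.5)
The identity \eqref{eq:recover-xray} follows by combining two results already established. Once it is proved, the "determines" part is immediate, because the right-hand side involves only the geometric quantities \(x_\pm(y)\) and \(\omega\cdot\nu(x_\pm(y))\), which depend on \(\Omega\) alone, together with the boundary limits \(L_\omega^f\).

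First I check that everything in \eqref{eq:recover-xray} is defined.
\begin{itemize}[leftmargin=2em]
\item Fix \(\omega\in\Sn\) and \(y\in\Omega_\omega\).
\item By Lemma~\ref{lem:chord-parametrization}, the endpoints satisfy \(x_\pm(y)\in\Gamma_\omega^\pm\subset\Gamma_\omega\). So both are non-glancing boundary points.
\item The singleton \(K=\{x_\pm(y)\}\) is compact in \(\Gamma_\omega\). Theorem~\ref{thm:large-data-asymptotic} therefore applies at these points.
\item Consequently the limit \eqref{eq:L-omega} exists at \(x_\pm(y)\), and \eqref{eq:L-omega-normal} holds there:
\[
L_\omega^f(x_\pm(y))=\partial_\nu w_\omega^f(x_\pm(y)).
\]
\end{itemize}

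Next I invoke Lemma~\ref{lem:endpoint-identity}, specifically \eqref{eq:line-integral-normal}. It expresses \(\Xray_\omega f(y)\) as
\[
\Xray_\omega f(y)=(\omega\cdot\nu(x_+(y)))\,\partial_\nu w_\omega^f(x_+(y))-(\omega\cdot\nu(x_-(y)))\,\partial_\nu w_\omega^f(x_-(y)).
\]
Substituting \(L_\omega^f\) for \(\partial_\nu w_\omega^f\) at the two endpoints gives \eqref{eq:recover-xray} directly.

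Finally, I record why the data determine \(\Xray_\omega f\). By its definition \eqref{eq:L-omega}, \(L_\omega^f\) is computed solely from \(\DN_f(t\phi_\omega|_{\bd\Om})\) for large \(t\). Hence two sources with equal Dirichlet-to-Neumann data on the cylindrical family have equal \(L_\omega\) on \(\Gamma_\omega\), by Lemma~\ref{lem:DN-equality-L}. By \eqref{eq:recover-xray}, they then have equal \(\Xray_\omega f\) on \(\Omega_\omega\), for every \(\omega\).

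There is no real obstacle here. The only point needing care is to verify that the chord endpoints lie in the non-glancing set. This is needed for two reasons: the asymptotic \eqref{eq:DN-asymptotic} is only available there, and the normal-derivative identity in Lemma~\ref{lem:endpoint-identity} requires \(w\) to vanish on a smooth boundary piece through \(x_\pm(y)\). Both facts are supplied by Lemma~\ref{lem:chord-parametrization}.
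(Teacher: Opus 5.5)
Your proposal is correct and follows the paper's proof essentially verbatim. You use Lemma~\ref{lem:chord-parametrization} to place $x_\pm(y)$ in $\Gamma_\omega^\pm$, so that \eqref{eq:L-omega-normal} (coming from Theorem~\ref{thm:large-data-asymptotic} on the compact set $\{x_+(y),x_-(y)\}\subset\Gamma_\omega$) holds at both endpoints, and you then substitute into \eqref{eq:line-integral-normal}, spelling out slightly more of the bookkeeping than the paper does.
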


\begin{proof}
By Lemma~\ref{lem:chord-parametrization}, the endpoints \(x_\pm(y)\) belong to the non-glancing sets \(\Gamma_\omega^\pm\).  Hence \eqref{eq:L-omega-normal} applies at both endpoints.  Substituting \(L_\omega^f=\partial_\nu w_\omega^f\) into the endpoint identity \eqref{eq:line-integral-normal} gives \eqref{eq:recover-xray}.
\end{proof}

\begin{corollary}\label{cor:DN-equality-xray}
Let \(f_1,f_2\in\mathcal F\).  If there is \(T_0>0\) such that
\[
    \DN_{f_1}(t\phi_\omega|_{\bd\Om})
    =
    \DN_{f_2}(t\phi_\omega|_{\bd\Om})
\]
for all \(t\ge T_0\) and all \(\omega\in\Sn\), then
\[
    \Xray_\omega f_1(y)=\Xray_\omega f_2(y)
\]
for every \(\omega\in\Sn\) and every \(y\in\Omega_\omega\).
\end{corollary}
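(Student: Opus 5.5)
The plan is to chain Lemma~\ref{lem:DN-equality-L} with Proposition~\ref{prop:recover-xray}. Fix \(\omega\in\Sn\). The hypothesis is exactly the one in Lemma~\ref{lem:DN-equality-L}: the two Dirichlet-to-Neumann maps agree on \(t\phi_\omega|_{\bd\Om}\) for all \(t\ge T_0\). That lemma therefore gives \(L_\omega^{f_1}=L_\omega^{f_2}\) on all of \(\Gamma_\omega\).

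The lemma relies on the limit \eqref{eq:L-omega} existing at every non-glancing point. This holds by Theorem~\ref{thm:large-data-asymptotic}, applied separately to \(f_1\) and to \(f_2\), with the compact set \(K=\{x\}\subset\Gamma_\omega\). For each fixed \(x\), the threshold in the limit is then simply the larger of \(T_0\) and the two thresholds \(T_K\).

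Next fix \(y\in\Omega_\omega\). By Lemma~\ref{lem:chord-parametrization}, both endpoints \(x_\pm(y)\) lie in \(\Gamma_\omega^\pm\subset\Gamma_\omega\), so the equality \(L_\omega^{f_1}=L_\omega^{f_2}\) holds at each of them. Formula \eqref{eq:recover-xray} writes \(\Xray_\omega f_j(y)\) entirely in terms of quantities that do not depend on \(j\): the geometric factors \(\omega\cdot\nu(x_\pm(y))\) and the boundary values \(L_\omega^{f_j}(x_\pm(y))\), which coincide for \(j=1,2\). Subtracting the two instances of \eqref{eq:recover-xray} gives \(\Xray_\omega f_1(y)=\Xray_\omega f_2(y)\). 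Since \(\omega\) and \(y\) were arbitrary, this proves the corollary.

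I expect no real obstacle, since all the analytic work is already contained in Theorem~\ref{thm:large-data-asymptotic}. The only point needing care is that the constants there are not uniform in \(\omega\). This is harmless because the argument is carried out one direction at a time. It also never uses glancing points, where \(L_\omega^f\) is not defined, because chord endpoints are always non-glancing.
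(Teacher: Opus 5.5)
Your proposal is correct and follows the paper's proof: Lemma~\ref{lem:DN-equality-L} gives \(L_\omega^{f_1}=L_\omega^{f_2}\) on \(\Gamma_\omega\), and Proposition~\ref{prop:recover-xray} then turns this into equality of the chord integrals. Your extra remarks fill in details the paper leaves implicit, namely existence of the limit via Theorem~\ref{thm:large-data-asymptotic} with \(K=\{x\}\), the fact that the argument proceeds one direction at a time, and that chord endpoints are never glancing.
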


\begin{proof}
By Lemma~\ref{lem:DN-equality-L}, the equality of the DN maps implies equality of the limits \(L_\omega^{f_1}=L_\omega^{f_2}\) on \(\Gamma_\omega\).  Proposition~\ref{prop:recover-xray} then gives equality of the corresponding chord integrals.
\end{proof}

\subsection{Glancing points cause no loss of data}\label{subsec:glancing-removal}

The boundary asymptotic is used only on the non-glancing boundary set \(\Gamma_\omega\).  This subsection records explicitly that this loses no information needed for the X-ray step.  For a fixed direction \(\omega\), the only projected lines not represented by interior chords are tangent lines, and their chord length is zero.

\begin{lemma}\label{lem:glancing-projection-density}
Fix \(\omega\in\Sn\).  Then
\begin{equation}\label{eq:projection-closure}
    \overline{\Omega_\omega}=P_{\omega^\perp}\overline\Omega,
\end{equation}
and the projected glancing set is exactly the boundary of the projected domain:
\begin{equation}\label{eq:projected-glancing-boundary}
    P_{\omega^\perp}\Gamma_\omega^{\mathrm{gl}}
    =
    \partial\Omega_\omega .
\end{equation}
Moreover, if \(y_j\in\Omega_\omega\) and \(y_j\to y_0\in\partial\Omega_\omega\), then
\begin{equation}\label{eq:ell-to-zero-at-glancing}
    \ell_\omega(y_j)=s_+(y_j)-s_-(y_j)\longrightarrow 0 .
\end{equation}
Consequently, the positive-length chords \(y\in\Omega_\omega\), whose endpoints are non-glancing by Lemma~\ref{lem:chord-parametrization}, are dense in the set of all projected chords \(\overline{\Omega_\omega}\).  The missing boundary family consists only of zero-length tangent chords.
\end{lemma}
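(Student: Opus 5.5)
We prove the three assertions in order, using only convexity, compactness of \(\overline\Omega\), and Lemma~\ref{lem:chord-parametrization}.

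\emph{Step 1: projection of the closure.} Since \(P_{\omega^\perp}\) is continuous and \(\overline\Omega\) is compact, \(P_{\omega^\perp}\overline\Omega\) is compact and contains \(\Omega_\omega\), so \(\overline{\Omega_\omega}\subset P_{\omega^\perp}\overline\Omega\). Conversely, \(\Omega\) is a nonempty bounded open convex set, so \(\overline\Omega=\overline{\Omega}^{\,}\) is the closure of \(\Omega\). Hence every \(x\in\overline\Omega\) is a limit of points \(x_j\in\Omega\), and then \(P_{\omega^\perp}x=\lim P_{\omega^\perp}x_j\in\overline{\Omega_\omega}\). This gives \eqref{eq:projection-closure}.

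\emph{Step 2: the projected glancing set.} For \(y_0\in\partial\Omega_\omega\), the line \(y_0+\R\omega\) meets \(\overline\Omega\) by Step 1 but does not meet \(\Omega\), since \(\Omega_\omega\) is open and \(y_0\notin\Omega_\omega\). So the line meets \(\partial\Omega\) at some point \(x\). If \(\omega\cdot\nu(x)\neq0\), the line would cross \(\partial\Omega\) transversally and enter \(\Omega=\{\rho<0\}\); this is the same derivative argument as in Lemma~\ref{lem:chord-parametrization}. Hence \(x\in\Gamma_\omega^{\mathrm{gl}}\) and \(y_0\in P_{\omega^\perp}\Gamma_\omega^{\mathrm{gl}}\).

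Conversely, let \(x\in\Gamma_\omega^{\mathrm{gl}}\). The line \(x+\R\omega\) lies in the tangent hyperplane at \(x\), which is a supporting hyperplane. By strict convexity it meets \(\overline\Omega\) only at \(x\), so it misses \(\Omega\) and \(P_{\omega^\perp}x\notin\Omega_\omega\). Since \(P_{\omega^\perp}x\) does lie in \(P_{\omega^\perp}\overline\Omega=\overline{\Omega_\omega}\), it belongs to \(\partial\Omega_\omega\). This gives \eqref{eq:projected-glancing-boundary}.

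\emph{Step 3: chord lengths tend to zero.} This is the only step needing some care. Suppose \(y_j\to y_0\in\partial\Omega_\omega\) but \(\ell_\omega(y_j)\) does not tend to zero. Pass to a subsequence with \(\ell_\omega(y_j)\ge c>0\) and \(s_\pm(y_j)\to s_\pm^*\), which is possible because \(\Omega\) is bounded. Then \(s_+^*-s_-^*\ge c\). The closed segments \(\{y_j+s\omega: s_-(y_j)\le s\le s_+(y_j)\}\) lie in \(\overline\Omega\), so by closedness the limiting segment \(\{y_0+s\omega: s_-^*\le s\le s_+^*\}\) also lies in \(\overline\Omega\). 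This is a nondegenerate segment in \(\overline\Omega\cap(y_0+\R\omega)\). By Step 2, however, that intersection is contained in the supporting tangent hyperplane at a glancing point, and strict convexity makes it a single point. This contradiction proves \eqref{eq:ell-to-zero-at-glancing}.

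\emph{Consequence.} The density statement follows directly. \(\Omega_\omega\) is dense in \(\overline{\Omega_\omega}\), each \(y\in\Omega_\omega\) carries a positive-length chord with non-glancing endpoints by Lemma~\ref{lem:chord-parametrization}, and each \(y_0\in\partial\Omega_\omega\) corresponds to a tangent line meeting \(\overline\Omega\) in a single point, i.e. a zero-length chord.
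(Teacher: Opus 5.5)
Your proposal is correct and follows essentially the same route as the paper. Both use the same compactness and density argument for $\overline{\Omega_\omega}=P_{\omega^\perp}\overline\Omega$, the same transversality and supporting-hyperplane arguments for the two inclusions in the glancing identity, and a contradiction argument for the chord lengths. The only difference is in Step 3: you conclude via Step 2 that the line $y_0+\R\omega$ meets $\overline\Omega$ in a single glancing point, whereas the paper notes that the open segment between the two distinct limit endpoints lies in $\Omega$, which forces $y_0\in\Omega_\omega$; both rest on the same strict convexity.
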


\begin{proof}
Since \(\overline\Omega\) is compact and \(P_{\omega^\perp}\) is continuous, \(P_{\omega^\perp}\overline\Omega\) is compact.  Because \(\Omega\) is dense in \(\overline\Omega\), this compact set is the closure of \(P_{\omega^\perp}\Omega=\Omega_\omega\).  This proves \eqref{eq:projection-closure}.

Let \(x\in\Gamma_\omega^{\mathrm{gl}}\), and set \(y=P_{\omega^\perp}x\).  Then \(y\in\overline{\Omega_\omega}\).  Since \(\omega\cdot\nu(x)=0\), the line \(x+\R\omega\) lies in the tangent hyperplane to \(\partial\Omega\) at \(x\).  Uniform convexity implies strict convexity of \(\overline\Omega\); hence this supporting hyperplane meets \(\overline\Omega\) only at \(x\).  Thus the line \(y+\R\omega\) does not meet \(\Omega\), so \(y\notin\Omega_\omega\).  Therefore \(y\in\partial\Omega_\omega\).

Conversely, let \(y\in\partial\Omega_\omega\).  By \eqref{eq:projection-closure}, there is \(x\in\overline\Omega\) with \(P_{\omega^\perp}x=y\).  Since \(y\notin\Omega_\omega\), this point cannot lie in \(\Omega\), and hence \(x\in\partial\Omega\).  If \(\omega\cdot\nu(x)\ne0\), then the line \(x+\R\omega\) crosses \(\partial\Omega\) transversely at \(x\).  The implicit function theorem, or simply the sign change of a defining function along this transverse line, then gives nearby points of \(\Omega\) with the same projection \(y\), contradicting \(y\notin\Omega_\omega\).  Hence \(x\in\Gamma_\omega^{\mathrm{gl}}\), proving \eqref{eq:projected-glancing-boundary}.

It remains to prove \eqref{eq:ell-to-zero-at-glancing}.  Suppose instead that, after passing to a subsequence, \(\ell_\omega(y_j)\ge\delta>0\).  The endpoints
\[
    x_{\pm,j}=y_j+s_\pm(y_j)\omega
\]
lie in the compact set \(\partial\Omega\), so after passing to a further subsequence they converge to points \(x_\pm\in\partial\Omega\).  Their projections are both \(y_0\), and
\[
    |x_+-x_-|=\lim_{j\to\infty}\ell_\omega(y_j)\ge\delta,
\]
so \(x_+\ne x_-\).  By strict convexity, the open segment joining \(x_-\) to \(x_+\) lies in \(\Omega\).  Every point of this segment projects to \(y_0\), so \(y_0\in\Omega_\omega\), contradicting \(y_0\in\partial\Omega_\omega\).  Therefore \(\ell_\omega(y_j)\to0\).
\end{proof}

\begin{lemma}\label{lem:xray-continuous-extension}
Let \(F\in C(\overline\Omega)\), and fix \(\omega\in\Sn\).  Define
\begin{equation}\label{eq:extended-chord-transform}
    \overline{\Xray}_\omega F(y)
    =
    \begin{cases}
    \displaystyle\int_{s_-(y)}^{s_+(y)}F(y+s\omega)\,ds,& y\in\Omega_\omega,\\[1.2em]
    0,& y\in\omega^\perp\setminus\Omega_\omega.
    \end{cases}
\end{equation}
Then \(\overline{\Xray}_\omega F\) is continuous on \(\omega^\perp\).  In particular, it vanishes on \(\partial\Omega_\omega=P_{\omega^\perp}\Gamma_\omega^{\mathrm{gl}}\).
\end{lemma}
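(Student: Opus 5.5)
The plan is to prove continuity separately at points of the open set \(\Omega_\omega\), at points of the open exterior \(\omega^\perp\setminus\overline{\Omega_\omega}\), and at points of \(\partial\Omega_\omega\). On the exterior, \(\overline{\Xray}_\omega F\) vanishes identically, so it is trivially continuous there.

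\emph{Interior points.} Fix \(y_0\in\Omega_\omega\). By Lemma~\ref{lem:chord-parametrization}, the endpoint functions \(s_\pm\) are smooth, in particular continuous, near \(y_0\). I would write
\[
\overline{\Xray}_\omega F(y)=\int_{s_-(y)}^{s_+(y)}F(y+s\omega)\,ds .
\]
Since \(F\) is uniformly continuous on the compact set \(\overline\Omega\), the integrand converges uniformly in \(s\) as \(y\to y_0\), and the integration limits move continuously. A standard splitting then gives
\[
|\overline{\Xray}_\omega F(y)-\overline{\Xray}_\omega F(y_0)|\le \ell_\omega(y_0)\,\sup_s|F(y+s\omega)-F(y_0+s\omega)| +\|F\|_\infty\bigl(|s_+(y)-s_+(y_0)|+|s_-(y)-s_-(y_0)|\bigr),
\]
where the supremum is taken over the common part of the two chords. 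Some care is needed there, because \(y+s\omega\) must lie in \(\overline\Omega\) for the comparison to make sense. I would handle this by intersecting the intervals \([s_-(y),s_+(y)]\) and \([s_-(y_0),s_+(y_0)]\) and bounding the two leftover pieces by \(\|F\|_\infty\) times their length.

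\emph{Boundary points.} This is the main point. Let \(y_0\in\partial\Omega_\omega\). Then \(\overline{\Xray}_\omega F(y_0)=0\), and we need \(\overline{\Xray}_\omega F(y)\to0\) as \(y\to y_0\). For \(y\notin\Omega_\omega\) the value is already zero. For \(y\in\Omega_\omega\) we have the trivial bound
\[
|\overline{\Xray}_\omega F(y)|\le\|F\|_{L^\infty}\,\ell_\omega(y),
\]
and \eqref{eq:ell-to-zero-at-glancing} of Lemma~\ref{lem:glancing-projection-density} gives \(\ell_\omega(y_j)\to0\) along every sequence \(y_j\in\Omega_\omega\) with \(y_j\to y_0\). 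A sequential argument therefore yields continuity at \(y_0\). Finally, the identification \(\partial\Omega_\omega=P_{\omega^\perp}\Gamma_\omega^{\mathrm{gl}}\) is \eqref{eq:projected-glancing-boundary}, which gives the last sentence of the statement.

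I expect no real obstacle, since the length-to-zero property is already established in Lemma~\ref{lem:glancing-projection-density}. The only mildly delicate step is the bookkeeping for the interior case with moving endpoints.
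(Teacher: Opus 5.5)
Your proposal is correct and follows the paper's proof: both split into the exterior (identically zero), the interior (continuity of \(s_\pm\) together with continuity of \(F\)), and boundary points, where \(|\overline{\Xray}_\omega F(y)|\le\|F\|_{L^\infty}\ell_\omega(y)\) is combined with \eqref{eq:ell-to-zero-at-glancing}. The only difference is that your interior estimate, which splits the integral over the intersection of the two chords, spells out a step the paper states in one line.
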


\begin{proof}
Continuity inside \(\Omega_\omega\) follows from the smooth dependence of \(s_\pm(y)\) and the continuity of \(F\).  The function is identically zero on the open set \(\omega^\perp\setminus\overline{\Omega_\omega}\).  It remains to check continuity at \(y_0\in\partial\Omega_\omega\).  If \(y_j\to y_0\) and \(y_j\in\Omega_\omega\), then
\[
    |\overline{\Xray}_\omega F(y_j)|
    \le
    \|F\|_{L^\infty(\Omega)}\ell_\omega(y_j)
    \longrightarrow 0
\]
by Lemma~\ref{lem:glancing-projection-density}.  If \(y_j\notin\Omega_\omega\), the value is already zero.  Hence \(\overline{\Xray}_\omega F\) is continuous at the boundary as well.
\end{proof}

\begin{proposition}\label{prop:no-loss-glancing-xray}
Let \(F\in C(\overline\Omega)\).  Suppose that, for every \(\omega\in\Sn\), the chord integrals \(\Xray_\omega F(y)\) are known for all positive-length chords \(y\in\Omega_\omega\).  Then the full Euclidean X-ray transform of the zero extension \(\widetilde F\) is known for all \(y\in\omega^\perp\): it is \(\Xray_\omega F(y)\) on \(\Omega_\omega\) and is zero on \(\omega^\perp\setminus\Omega_\omega\).  In particular, if \(\Xray_\omega F=0\) on \(\Omega_\omega\) for every \(\omega\), then \(F=0\) in \(\Omega\).
\end{proposition}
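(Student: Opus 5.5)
We identify the Euclidean X-ray transform of the zero extension \(\widetilde F\in L^\infty(\R^n)\), compactly supported in \(\overline\Omega\),
\[
    X\widetilde F(\omega,y)=\int_{\R}\widetilde F(y+s\omega)\,ds,
    \qquad \omega\in\Sn,\ y\in\omega^\perp,
\]
with \(\overline{\Xray}_\omega F(y)\) from Lemma~\ref{lem:xray-continuous-extension}. This is done in two cases. For \(y\in\Omega_\omega\), Lemma~\ref{lem:chord-parametrization} gives \(\Omega\cap(y+\R\omega)=\{y+s\omega:s_-(y)<s<s_+(y)\}\). Since \(\widetilde F\) vanishes off \(\Omega\), the line integral reduces to the chord integral \(\Xray_\omega F(y)\). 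For \(y\in\omega^\perp\setminus\Omega_\omega\), the line \(y+\R\omega\) does not meet \(\Omega\). By Lemma~\ref{lem:glancing-projection-density} it meets \(\overline\Omega\) in at most one point, namely a glancing point when \(y\in\partial\Omega_\omega\), and not at all when \(y\notin\overline{\Omega_\omega}\). In either case the integrand is zero except on a set of linear measure zero, so \(X\widetilde F(\omega,y)=0\). Hence the full X-ray transform is determined by the chord data, and it coincides with the continuous function \(\overline{\Xray}_\omega F\). This proves the first assertion.

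For the uniqueness assertion, assume \(\Xray_\omega F=0\) on \(\Omega_\omega\) for every \(\omega\). Then \(X\widetilde F(\omega,\cdot)\equiv0\) on \(\omega^\perp\) for every \(\omega\). We apply the Fourier slice theorem. Since \(\widetilde F\in L^1(\R^n)\), Fubini in the coordinates \(x=y+s\omega\) gives, for \(\xi\in\omega^\perp\),
\[
    \widehat{\widetilde F}(\xi)
    =\int_{\omega^\perp}e^{-iy\cdot\xi}\int_\R \widetilde F(y+s\omega)\,ds\,dy
    =\int_{\omega^\perp}e^{-iy\cdot\xi}\,X\widetilde F(\omega,y)\,dy = 0 ,
\]
where we used \(x\cdot\xi=y\cdot\xi\). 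Every \(\xi\in\R^n\) is orthogonal to some unit vector \(\omega\), because \(n\ge2\). Hence \(\widehat{\widetilde F}\equiv0\). Fourier injectivity on \(L^1\) then gives \(\widetilde F=0\) almost everywhere, and continuity of \(F\) on \(\Omega\) upgrades this to \(F=0\) pointwise in \(\Omega\).

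The only point needing care is the zero-measure handling at glancing lines, and this is already supplied by Lemma~\ref{lem:glancing-projection-density}. Everything else is Fubini together with the standard slice theorem; see \cite{Natterer2001}.
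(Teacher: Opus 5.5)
Your proposal is correct and follows essentially the paper's argument. You use the same two-case identification of the line integral of \(\widetilde F\): it equals the chord integral on \(\Omega_\omega\), and it is zero off \(\Omega_\omega\) because there the line is tangent or misses \(\overline\Omega\). You then apply the same Fourier slice identity.

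The differences are presentational. The paper cites Lemma~\ref{lem:xray-injective} for the injectivity step, whereas you reprove it inline. Your fact that a tangent line meets \(\overline\Omega\) in only one point comes, via strict convexity, from the proof of Lemma~\ref{lem:glancing-projection-density}, not from its statement.
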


\begin{proof}
For \(y\in\Omega_\omega\), the line \(y+\R\omega\) intersects \(\Omega\) in the chord \((s_-(y),s_+(y))\), so the Euclidean X-ray transform of the zero extension \(\widetilde F\) equals \(\Xray_\omega F(y)\).  If \(y\notin\Omega_\omega\), the line does not meet the open set \(\Omega\); at boundary points of \(\Omega_\omega\) it is a tangent line and contributes zero to the one-dimensional integral.  Thus the missing glancing values are forced to be zero, consistently with the continuous extension in Lemma~\ref{lem:xray-continuous-extension}.  The final assertion follows from Lemma~\ref{lem:xray-injective}.
\end{proof}

\subsection{Injectivity of the X-ray transform}\label{subsec:xray-injectivity}

\begin{lemma}\label{lem:xray-injective}
Let \(F\in C(\overline\Om)\).  If
\[
    \Xray_\omega F(y)=0
\]
for every \(\omega\in\Sn\) and every \(y\in \Omega_\omega\), then \(F=0\) in \(\Om\).
\end{lemma}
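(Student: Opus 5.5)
The plan is the classical Fourier slice argument applied to the zero extension. Let \(\widetilde F\) be the zero extension of \(F\) to \(\R^n\). It is bounded, compactly supported in \(\overline\Om\), and belongs to \(L^1(\R^n)\cap L^\infty(\R^n)\); it need not be continuous across \(\bd\Om\), but this does no harm. As in Proposition~\ref{prop:no-loss-glancing-xray}, for every \(\omega\in\Sn\) and every \(y\in\omega^\perp\),
\[
    \int_\R \widetilde F(y+s\omega)\,ds=0 .
\]
For \(y\in\Omega_\omega\) this is exactly the hypothesis \(\Xray_\omega F(y)=0\). For \(y\notin\overline{\Omega_\omega}\) the line misses \(\overline\Om\). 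For \(y\in\partial\Omega_\omega\) the line meets \(\overline\Om\) in a single point by Lemma~\ref{lem:glancing-projection-density}, so the integral vanishes. Only almost every \(y\) is needed in any case.

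Next fix \(\omega\) and \(\xi\in\omega^\perp\). Integrate \(\widetilde F(x)e^{-ix\cdot\xi}\) over \(\R^n\) using the coordinates \(x=y+s\omega\). Since \(x\cdot\xi=y\cdot\xi\), Fubini (justified because \(\widetilde F\in L^1\)) gives
\[
    \widehat{\widetilde F}(\xi)=\int_{\omega^\perp}e^{-iy\cdot\xi}\Bigl(\int_\R\widetilde F(y+s\omega)\,ds\Bigr)dy=0 .
\]
Every \(\xi\in\R^n\) lies in \(\omega^\perp\) for some \(\omega\in\Sn\). This is the point where \(n\ge2\) is used: for \(\xi\ne0\) pick any unit \(\omega\perp\xi\), and \(\xi=0\) is trivial. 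Hence \(\widehat{\widetilde F}\equiv0\).

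By Fourier uniqueness on \(L^1\), \(\widetilde F=0\) almost everywhere. Since \(F\) is continuous on the open set \(\Om\), it follows that \(F=0\) everywhere in \(\Om\).

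There is no serious obstacle. The only point requiring care is that the line integrals of \(\widetilde F\) vanish for all lines, including those not represented by interior chords, and Proposition~\ref{prop:no-loss-glancing-xray} together with Lemma~\ref{lem:glancing-projection-density} already settles this. One could alternatively cite the Fourier slice theorem in \cite{Natterer2001} directly.
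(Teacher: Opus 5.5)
Your proposal is correct and follows the paper's proof essentially verbatim: you take the zero extension, show that all line integrals vanish (tangent lines contribute nothing), derive the Fourier slice identity via Fubini, and conclude by Fourier uniqueness on \(L^1\). The only differences are cosmetic: you treat \(\xi=0\) directly (it lies in every \(\omega^\perp\)) rather than through continuity of \(\widehat{\widetilde F}\), and you spell out the passage from \(\widetilde F=0\) almost everywhere to \(F=0\) pointwise in \(\Om\) using continuity of \(F\).
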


\begin{proof}
Extend \(F\) by zero to a compactly supported function \(\widetilde F\in L^1(\R^n)\).  For fixed \(\omega\), define the Euclidean X-ray transform
\[
    R_\omega \widetilde F(y)
    =
    \int_\R \widetilde F(y+s\omega)\,ds,
    \qquad y\in\omega^\perp .
\]
If \(y\in\Omega_\omega\), this is precisely \(\Xray_\omega F(y)\), and if \(y\notin\Omega_\omega\), the line \(y+\R\omega\) does not meet \(\Omega\), so \(R_\omega\widetilde F(y)=0\).  Hence \(R_\omega\widetilde F=0\) for every \(\omega\).

Taking the Fourier transform in the \(y\)-variable on \(\omega^\perp\), one obtains the Fourier slice identity; this is the standard Fourier-slice proof of injectivity for the X-ray/Radon transform; see \cite{Natterer2001}:
\[
    \widehat{R_\omega\widetilde F}(\xi)
    =
    \widehat{\widetilde F}(\xi),
    \qquad \xi\in\omega^\perp .
\]
Indeed,
\[
\begin{aligned}
    \widehat{R_\omega\widetilde F}(\xi)
    &=\int_{\omega^\perp} e^{-iy\cdot\xi}
        \int_\R \widetilde F(y+s\omega)\,ds\,dy  \\
    &=\int_{\R^n} e^{-ix\cdot\xi}\widetilde F(x)\,dx
     =\widehat{\widetilde F}(\xi),
\end{aligned}
\]
because \(\xi\cdot\omega=0\).  Given any \(\xi\ne0\), choose \(\omega\in\Sn\) with \(\xi\in\omega^\perp\).  Since \(R_\omega\widetilde F=0\), the identity gives \(\widehat{\widetilde F}(\xi)=0\).  Since \(\widetilde F\in L^1(\R^n)\), its Fourier transform is continuous; hence the same conclusion holds at \(\xi=0\).  Thus \(\widehat{\widetilde F}\equiv0\), and hence \(\widetilde F=0\), so \(F=0\) in \(\Omega\).
\end{proof}

\section{Proof of uniqueness}\label{sec:uniqueness-proofs}

\begin{theorem}\label{thm:large-cylindrical-uniqueness}
Let \(\Om\subset\R^n\), \(n\ge2\), be bounded, smooth, and uniformly convex.  Let
\[
    f_1,f_2\in C^\infty(\overline\Om),
    \qquad
    0<c\le f_j\le C<\infty.
\]
Assume that there is \(T_0>0\) such that
\begin{equation}\label{eq:DN-equality-large-family}
    \DN_{f_1}(g_{t,\omega})=
    \DN_{f_2}(g_{t,\omega})
\end{equation}
for all \(t\ge T_0\) and all \(\omega\in\Sn\).  Then
\[
    f_1=f_2
    \quad\text{in }\Om.
\]
\end{theorem}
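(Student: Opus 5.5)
The plan is to assemble the results of Sections~\ref{sec:large-data-asymptotics} and~\ref{sec:recovery-line-integrals}; essentially all the analytic work has already been done. First, the hypothesis \eqref{eq:DN-equality-large-family} is exactly the hypothesis of Corollary~\ref{cor:DN-equality-xray}, since \(g_{t,\omega}=t\phi_\omega|_{\bd\Om}\). Applying that corollary yields
\[
    \Xray_\omega f_1(y)=\Xray_\omega f_2(y)
    \qquad\text{for every }\omega\in\Sn,\ y\in\Omega_\omega .
\]
Behind this step lie Theorem~\ref{thm:large-data-asymptotic}, which guarantees that the limits \(L_\omega^{f_j}\) in \eqref{eq:L-omega} exist and equal \(\partial_\nu w_\omega^{f_j}\) on \(\Gamma_\omega\), and Proposition~\ref{prop:recover-xray}, which expresses the chord integrals through these limits at the non-glancing endpoints \(x_\pm(y)\). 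Both \(f_1\) and \(f_2\) belong to \(\mathcal F\), so both results apply for each fixed \(\omega\).

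Second, set \(F=f_1-f_2\in C^\infty(\overline\Om)\subset C(\overline\Om)\). The chordwise transform \(\Xray_\omega\) in Definition~\ref{def:xray} is linear in \(f\), so \(\Xray_\omega F(y)=0\) for all \(\omega\in\Sn\) and all \(y\in\Omega_\omega\). Lemma~\ref{lem:xray-injective}, or equivalently Proposition~\ref{prop:no-loss-glancing-xray}, then gives \(F=0\) in \(\Om\), that is, \(f_1=f_2\).

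The main point to check is that Theorem~\ref{thm:large-data-asymptotic} holds for every \(\omega\), even though its constants \(C_\omega,T_\omega\) are not uniform in \(\omega\). Nonuniformity is harmless here. The hypothesis holds for all \(t\ge T_0\), so for each fixed \(\omega\) and each \(x\in\Gamma_\omega\) we may take \(t\ge\max(T_0,T_K)\), with \(K\) a compact neighbourhood of \(x\) in \(\Gamma_\omega\), and pass to the limit pointwise as in Lemma~\ref{lem:DN-equality-L}. No uniformity over directions is needed, because the X-ray injectivity argument only uses, for each \(\xi\neq0\), one direction \(\omega\perp\xi\). Glancing lines are handled by Lemma~\ref{lem:glancing-projection-density}: they carry zero chord length, so no data is lost there.
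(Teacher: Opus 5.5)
Your proposal is correct and follows the paper's proof essentially verbatim. Corollary~\ref{cor:DN-equality-xray} gives equality of all chord integrals, and Lemma~\ref{lem:xray-injective} (with Proposition~\ref{prop:no-loss-glancing-xray} covering the zero-length tangent lines) applied to $F=f_1-f_2$ yields $f_1=f_2$; your remark that the non-uniformity of the constants in $\omega$ is harmless matches the paper's fixed-$\omega$ treatment.
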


\begin{proof}[Proof of Theorem~\ref{thm:large-cylindrical-uniqueness}]
By Corollary~\ref{cor:DN-equality-xray}, the equality of the Dirichlet-to-Neumann maps on the large cylindrical family implies
\[
    \Xray_\omega f_1(y)=\Xray_\omega f_2(y)
\]
for every \(\omega\in\Sn\) and every \(y\in\Omega_\omega\).  Equivalently,
\[
    \Xray_\omega(f_1-f_2)(y)=0
\]
for every positive-length chord of \(\Omega\).  Proposition~\ref{prop:no-loss-glancing-xray} shows that the missing glancing values cause no loss for the Euclidean X-ray transform of the zero extension.  Lemma~\ref{lem:xray-injective}, applied to \(F=f_1-f_2\), gives \(f_1=f_2\) in \(\Omega\).
\end{proof}

\begin{proof}[Proof of Theorem~\ref{thm:main}]
For each fixed \(\omega\), the trace \(g_{t,\omega}=t\phi_\omega|_{\partial\Omega}\) is a smooth boundary value for every \(t>0\).  The assumed equality of the full Dirichlet-to-Neumann maps therefore gives
\[
    \DN_{f_1}(g_{t,\omega})=\DN_{f_2}(g_{t,\omega})
\]
for all \(t>0\) and all \(\omega\in S^{n-1}\).  In particular, the hypothesis of Theorem~\ref{thm:large-cylindrical-uniqueness} holds for any \(T_0>0\), and that theorem gives \(f_1=f_2\) in \(\Omega\).
\end{proof}

\section*{Acknowledgements}
C.I.C. was supported by NSTC grant 113-2115-M-A49-018-MY3. T.G. was supported by grant number NBHM(R.P)/R\&D II/9464.

\bibliographystyle{abbrv}
\bibliography{monge_ampere_refs_v5}

\end{document}